\documentclass[11pt]{amsart}
\usepackage[mathscr]{eucal}
\usepackage{palatino, mathpazo, amsfonts, mathrsfs, amscd}
\usepackage[all]{xy}
\usepackage{url}
\usepackage{amssymb, amsmath, amsthm}
\usepackage{stackrel}
\usepackage{bm}

\usepackage{tikz-cd}
\usepackage{marginnote}


\newtheorem{theorem}{Theorem}[section]
\newtheorem{lemma}[theorem]{Lemma}

\newtheorem{proposition}[theorem]{Proposition}
\newtheorem{corollary}[theorem]{Corollary}

\theoremstyle{remark}
\newtheorem{remark}[theorem]{Remark}

\newtheorem{definition}[theorem]{Definition}
\newtheorem{notation}[theorem]{Notation}

\numberwithin{equation}{subsection}
\usepackage{todonotes}

\makeatletter
\def\imod#1{\allowbreak\mkern10mu({\operator@font mod}\,\,#1)}
\makeatother

\newcommand{\spec}{\operatorname{Spec}}

\newcommand{\CC}{\mathbb{C}}
\newcommand{\ZZ}{\mathbb{Z}}
\newcommand{\LL}{\mathbb{L}}
\newcommand{\KK}{\mathbb{K}}

\newcommand{\QQ}{\mathbb{Q}}

\newcommand{\RR}{\mathbb{R}}

\newcommand{\s}[1]{\mathscr{#1}}  
\newcommand{\cc}[1]{\mathcal{#1}}  

\newcommand{\sMbar}{\overline{\mathscr{M}}}

\newcommand{\sA}{{\mathscr{A}}}
\newcommand{\cL}{{\mathcal{L}}}

\newcommand{\bq}{\mathbf{q}}

\newcommand{\bt}{\bm{t}}

\newcommand{\btau}{\bm{\tau}}
\newcommand{\ii}{\mathbb{1}}
\newcommand{\iI}{\mathbb{I}}

\DeclareMathOperator{\Hom}{Hom}

\newcommand{\br}[1]{\left\langle#1\right\rangle}  
\newcommand{\set}[1]{\left\{#1\right\}}  

\setcounter{tocdepth}{1}

\title[Gromov--Witten Theory of Toric Birational Transformations]{Gromov--Witten Theory of Toric Birational Transformations}
\author{Pedro Acosta and Mark Shoemaker}

\begin{document}
%

\begin{abstract}
We investigate the effect of a general toric wall crossing on genus zero Gromov--Witten theory.    
Given two complete toric orbifolds $X_+$ and $X_-$ related by wall crossing under variation of GIT, we prove that their respective $I$-functions are related by linear transformation and asymptotic expansion. 
We use this comparison to deduce a similar result for birational complete intersections in $X_+$ and $X_-$. This extends the work of the previous authors in \cite{AS} to the case of complete intersections in toric varieties, and
generalizes some of the results of Coates--Iritani--Jiang \cite{CIJ} on the crepant transformation conjecture to the setting of non-zero discrepancy.  
\end{abstract}

\maketitle
{\small
\tableofcontents}

\section{Introduction}
A longstanding goal in Gromov--Witten theory has been to understand how Gromov--Witten invariants change under 
birational transformation.  Results in this direction have generally taken one of two forms. 
Namely, they focus on how a birational transformation affects either (a) the individual Gromov--Witten invariants of a space \cite{Ga, Hu, HH, GW, ILLW}, or (b)  certain formal structures defined via Gromov--Witten theory, e.g. quantum cohomology, generating functions, Lagrangian cones, etc... \cite{LR, BCS, BCR, CIT, CI, CIJ, Zhou}.  

Results of the first type are often more general in terms of the cases they cover.  The most systematic treatment along these lines of which we are aware is \cite{GW}, which deals with wall crossing under general variation of GIT.  Theorems of the second type are important however, as they show how certain Gromov--Witten-theoretic structures may be preserved under birational transformation even as individual invariants are not.  As an example of this phenomenon, in some instances the generating functions of Gromov--Witten invariants of birational spaces are related by analytic continuation \cite{BG}.  
These types or results are predicted by the \emph{crepant transformation conjecture} \cite{CIT, CR}, but as such, deal
 almost exclusively with the case of $K$-equivalent birational spaces.
%

The purpose of this paper is to show that in fact analogous statements 
can be made even when the two spaces are not $K$-equivalent.
In previous work \cite{AS}, the authors conjectured that when $X_+$ and $X_-$ are birational but not $K$-equivalent, 
the respective generating functions for genus zero Gromov--Witten invariants will be related by asymptotic expansion.  This was verified 
in the particular case where the birational transformation was a blowup of $[\CC^N/G]$ at the origin (for $G$ a finite abelian group).  The main result of the current paper verifies the above conjecture in much greater generality, showing that the same type of relation holds when $X_+$ and $X_-$ are toric varieties (or orbifolds) related by variation of GIT as well as for compatible complete intersection varieties $Y_+ \subset X_+$ and $Y_- \subset X_-$.

\subsection{Toric results}
Let $X_+$ and $X_-$ be complete toric orbifolds obtained as GIT quotients of a torus $K$ acting on $\CC^m$, and related by a wall crossing in
the space of stability conditions.  Under the assumption that $X_+$ and $X_-$ are \emph{not} $K$-equivalent, one can construct a common toric blow-up \[\pi_{\pm}: \widetilde X \to X_{\pm}\] such that 
$\pi_+^* (K_{X_+}) - \pi_-^* (K_{X_-})$ is effective.
 (see Section~\ref{s:wc}).

By the work of Coates--Corti--Iritani--Tseng in \cite{CCIT2}, 
there exists an explicit expression for cohomology valued functions $I^+(y, z)$ and $I^-(\tilde y, z)$ which encode the genus zero Gromov--Witten theory of $X_+$ and $X_-$.  
Although not how they are defined a-priori, these $I$-functions can be understood as generating functions for the respective sets of Gromov--Witten invariants.  Our main theorem relates these functions: 
\begin{theorem}[Theorem~\ref{t:finalresult}]  \label{t:main1}
There exists a linear transformation \[L: H^*_{CR}(X_+)[z, z^{-1}] \to H^*_{CR}(X_-)[z, z^{-1}]\] 
such that the power series asymptotic expansion of $L \cdot I^+(y, z)$ in a suitable variable (determined by the wall crossing) recovers the function $I^-(\tilde y, z)$.
%
\end{theorem}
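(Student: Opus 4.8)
The plan is to reduce the statement to the explicit combinatorial formulas for $I^+$ and $I^-$ furnished by the toric mirror theorem of Coates--Corti--Iritani--Tseng \cite{CCIT2}, and then to analyze, via Stirling-type asymptotics, the extra summands that occur on one side of the wall but not on the other. Since $X_+$ and $X_-$ are GIT quotients of the same torus $K$ acting on $\CC^m$, they share the curve-class lattice $\LL$ and the toric boundary rays $D_1, \dots, D_m$; only the stability chamber --- a maximal cone of the secondary fan --- differs. Write $\mathrm{NE}_\pm \subset \LL$ for the corresponding semigroups of effective curve classes. The first task is purely combinatorial: record, via the common blow-up $\widetilde X$, how $\mathrm{NE}_+$ and $\mathrm{NE}_-$ are related across the separating wall $W$. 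There is a primitive wall-crossing class $\ell \in \LL$ such that, relative to the shared face, passing from $\mathrm{NE}_+$ to $\mathrm{NE}_-$ reverses the sign of the $\ell$-coordinate, and --- crucially --- the pairing of $\ell$ with $K_{X_+}$ and $K_{X_-}$ differs by the effective discrepancy $\pi_+^*K_{X_+} - \pi_-^*K_{X_-}$. The non-vanishing of this discrepancy is precisely what forces an asymptotic expansion rather than honest analytic continuation.

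Next I reorganize $I^+$. Writing each $d \in \mathrm{NE}_+$ as $d = \bar d + n\ell$ with $\bar d$ in a transverse slice and $n \ge 0$, I perform the sum over $n$ first. Under the change of variables to the $X_-$ K\"ahler parameters $\tilde y$ --- for which the monomial $y^\ell$ becomes essentially the inverse of one of the $\tilde y$-coordinates, say $\tilde q$ (possibly up to a root coming from the orbifold structure) --- the inner sum over $n$ becomes a series in $\tilde q$; because of the discrepancy, its coefficients grow like $n^\alpha$ for some positive $\alpha$, so the series has radius of convergence zero and must be read as an asymptotic expansion at $\tilde q = 0$. This mirrors the reorganization used by Coates--Iritani--Jiang \cite{CIJ} in the crepant case and by the previous authors \cite{AS} for a blow-up at a point; the genuinely new feature is the divergence of the inner series.

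Rewriting the products $\prod(D_i + kz)$ appearing in the CCIT formula as ratios of $\Gamma$-functions turns the inner $n$-sum into a ratio-of-Gammas hypergeometric series. Applying Stirling's expansion to the $\Gamma$-factors indexed by the rays that $\pi_-$ contracts isolates a leading term which, after resummation over $n$, is exactly the summand of the CCIT formula for $I^-$ attached to the image of $\bar d$ in $\mathrm{NE}_-$, up to a factor independent of $\bar d$. Collecting these factors --- the Stirling prefactors together with the identification of the Chen--Ruan twisted sectors of $X_+$ with those of $X_-$ dictated by the box/age combinatorics of the two stacky fans through $\widetilde X$ --- defines the linear transformation $L : H^*_{CR}(X_+)[z, z^{-1}] \to H^*_{CR}(X_-)[z,z^{-1}]$. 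Summing over $\bar d$ then yields $L \cdot I^+(y,z) \sim I^-(\tilde y, z)$ as an asymptotic series in $\tilde q$, with $I^-(\tilde y, z)$ as the leading term, which is the assertion.

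The main obstacle is this last step. One must control the $n$-asymptotics sharply and uniformly enough to be certain that the leading-order term is \emph{exactly} $I^-$, with no lower-order contributions misattributed, and simultaneously match the inertia components of $X_+$ and $X_-$: their Chen--Ruan cohomologies genuinely differ, so $L$ cannot be a scalar but must permute and rescale twisted sectors according to how the box elements of the two stacky fans are related through $\widetilde X$. Reconciling the analytic estimate with this combinatorial bookkeeping is the crux of the argument; the remainder --- setting up the secondary fan, the variable change, and the $\Gamma$-function manipulations --- is routine.
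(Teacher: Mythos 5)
Your proposal takes a fundamentally different route from the paper, and the paper explicitly disavows that route: the introduction states that ``the technical challenge in proving the above theorem is due to the fact that we lack an explicit expression for the asymptotic expansion of $I^+(y,z)$,'' and so the authors ``instead shift our focus from the functions $I^+$ and $I^-$ themselves to the differential equations they satisfy.'' Your plan is precisely the direct attack the authors abandon: reorganize $I^+$ along $d = \bar d + n\ell$, apply Stirling to the $\Gamma$-factors, and hope to resum the leading order into $I^-$. The paper instead (i) \emph{regularizes} $I^-$ by dividing each coefficient by a $\Gamma$-factor to obtain a convergent series $I^-_{reg}$, (ii) applies the Laplace transform to $I^-_{reg}$ to obtain an honest analytic function $\iI$, (iii) invokes Watson's lemma to conclude $\iI \sim I^-$ (this is the one place asymptotics enter, and it is essentially a textbook lemma rather than a bespoke Stirling estimate), (iv) checks that $\iI$ satisfies the GKZ system, and (v) shows the components of $I^+$ generate the solution space of the (suitably $y$-adically completed) GKZ $D$-module --- using, in the non--weak-Fano case, a rank bound via the Batyrev ring and a theorem of Gonzalez--Woodward --- so that $L$ exists automatically as a change of basis. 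At no point does the paper compute the asymptotic expansion of $I^+$ or of $L \cdot I^+$ directly.

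The gap in your proposal is exactly the step you yourself flag as ``the crux.'' You assert that Stirling applied to the $\Gamma$-factors indexed by the contracted rays ``isolates a leading term which, after resummation over $n$, is exactly the summand of the CCIT formula for $I^-$,'' but you give no mechanism for extracting the full asymptotic expansion uniformly over $\bar d$, nor for showing the subleading Stirling corrections resum to zero rather than polluting the answer, nor for proving that the matrix $L$ read off from the leading Stirling prefactors is well-defined and $\CC[z,z^{-1}]$-linear. In the presence of nonzero discrepancy these Gamma-ratio series are genuinely divergent, so the ``resummation over $n$'' has no a priori meaning without first constructing a candidate analytic function (Borel/Laplace) or otherwise specifying a summation method --- which is precisely what the paper's regularization-plus-Laplace construction of $\iI$ supplies and your sketch omits. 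You also do not address the failure of the naive GKZ/solution-counting picture when $X_+$ is not weak Fano, though this would only matter for the uniqueness of $L$, which you do not claim. Absent a rigorous treatment of the resummation step, the proposal does not constitute a proof.
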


The technical challenge in proving the above theorem is due to the fact that we lack an explicit expression for the asymptotic expansion of $I^+(y,z)$.  
We instead shift our focus from the functions $I^+$ and $I^-$ themselves to the differential equations they satisfy.  In particular, we construct a function $\iI(y,z)$ taking values in $H^*_{CR}(X_-)[z, z^{-1}]$ which has asymptotic expansion equal to $I^-(\tilde y, z)$.  We then show that the components of $\iI(y, z)$ satisfy a system of differential equations, a completion of the GKZ system.  The components of $I^+(y, z)$ give a \emph{basis} of solutions to this system of equations, which then implies the existence of a linear map $L$ 
sending $I^+(y, z)$ to $\iI(y, z)$.

\subsection{Complete intersections}

Given a complete intersection $Y_+ \subset X_+$, the  \emph{quantum Lefschetz principal} (see \cite{Co} for a recent treatment) allows one to construct a function $I^+_Y$ encoding much of the Gromov--Witten theory of $Y_+$.  This function is expressed as an explicit term-by-term modification of the function $I^+$ given above.

In the final section we consider birational spaces $Y_+$ and $Y_-$, defined as complete intersections in  $ X_+$ and $ X_-$ respectively, and compatible with each other in a precise sense.  
We use Theorem~\ref{t:main1} in combination with the  \emph{quantum Lefschetz principal} \cite{Co} to deduce a  relationship between the generating functions $I^+_Y$ and $I^-_Y$.\begin{theorem}[Theorem~\ref{t:ci}]  
There exists a linear transformation \[L: H^*_{CR}(X_+)[z, z^{-1}] \to H^*_{CR}(X_-)[z, z^{-1}]\] 
such that the power series asymptotic expansion of $L \cdot I^+_Y(y, z)$ in a suitable variable (determined by the wall crossing) recovers the function $I^-_Y(\tilde y, z)$.
%
\end{theorem}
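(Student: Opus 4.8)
The plan is to mimic the architecture of the proof of Theorem~\ref{t:main1}, replacing the GKZ system throughout by its \emph{twisted} (hyperplane-section) variant. First recall the shape of the objects involved: by the quantum Lefschetz principle \cite{Co}, $I^+_Y(y,z)$ is obtained from $I^+(y,z)$ by multiplying the summand of degree $d$ by an explicit factor $M^+_d(z)$ assembled from the first Chern classes of the line bundles $L^+_1,\dots,L^+_c$ cutting out $Y_+$ and from the pairings $\langle c_1(L^+_j),d\rangle$ --- a product of terms of the form $\prod_k (c_1(L^+_j)+kz)$ in the nef case, and Coates' more elaborate numerator/denominator expression in general. The compatibility hypothesis on $Y_+$ and $Y_-$ is precisely that the $L^\pm_j$ come from the same characters of $K$; under the canonical identification of rational Picard groups supplied by the VGIT presentation and the common blow-up $\widetilde X$, the same collection of factors produces $I^-_Y$ from $I^-$. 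Write this as $I^\pm_Y=\mathcal{M}^\pm I^\pm$, with $\mathcal{M}^\pm$ the corresponding summand-wise multiplication operator.

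Next, rather than attack the asymptotic expansion of $I^+_Y$ directly, I would produce a function $\iI_Y(y,z)$ valued in $H^*_{CR}(X_-)[z,z^{-1}]$ whose power series asymptotic expansion in the wall-crossing variable equals $I^-_Y(\tilde y,z)$, and the natural candidate is $\iI_Y:=\mathcal{M}^-\iI$, where $\iI$ is the function constructed in the proof of Theorem~\ref{t:main1}. The point to check is that $\mathcal{M}^-$ commutes with the passage to the asymptotic expansion: $\mathcal{M}^-$ is diagonal in the degree grading with entries rational in $z$ and in the completed-GKZ variables, and the expansion of $\iI$ is extracted by an iterated-residue (Mellin--Barnes) procedure that intertwines with multiplication by such factors; hence the asymptotic expansion of $\iI_Y$ is $\mathcal{M}^- I^- = I^-_Y$.

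The heart of the matter is to exhibit a single differential system --- a completion of the twisted GKZ system attached to the defining data of $X_-$ enlarged by the characters of $L_1,\dots,L_c$ --- solved by the components of $\iI_Y$ and admitting the components of $I^+_Y$ as a basis of solutions. That the components of $I^+_Y=\mathcal{M}^+I^+$ solve the twisted GKZ system is the standard statement that the hyperplane-modified $I$-function of a toric complete intersection satisfies the GKZ system of the enlarged data; that the components of $\iI_Y=\mathcal{M}^-\iI$ solve its completion follows in the same way from the fact, established in the proof of Theorem~\ref{t:main1}, that $\iI$ solves the completed GKZ system of $X_-$, together with the observation that multiplication by the factors $M_d(z)$ carries the box and Euler operators of the $X_-$ system to those of the twisted system. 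The remaining basis statement is the complete-intersection analogue of the corresponding step for $I^+$ in Theorem~\ref{t:main1}: $\mathcal{M}^+$ sends the basis of GKZ solutions coming from the components of $I^+$ to solutions of the twisted system, and a rank count --- the twisted GKZ system has the expected number of solutions --- shows these still span. Granting this, the components of $I^+_Y$ being a basis of a completed system also solved by those of $\iI_Y$ yields a linear map $L_Y:H^*_{CR}(X_+)[z,z^{-1}]\to H^*_{CR}(X_-)[z,z^{-1}]$ with $L_Y\cdot I^+_Y=\iI_Y$; combined with the previous paragraph this gives that the asymptotic expansion of $L_Y\cdot I^+_Y$ recovers $I^-_Y$, which is the assertion. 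One expects moreover that $L_Y$ is the map $L$ of Theorem~\ref{t:main1} intertwined with $\mathcal{M}^\pm$.

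I expect the main obstacle to be the genuinely non-convex case of quantum Lefschetz, where $\mathcal{M}^\pm$ is no longer a clean numerator hypergeometric operator but Coates' more intricate modification: one must verify that this operator still (i) carries the completed-GKZ solution space of $X_-$ into that of the twisted system, (ii) preserves membership in the class of functions with the required asymptotic expansion and commutes with taking it, and (iii) does not destroy the basis property on the $X_+$ side --- equivalently, that the twisted GKZ system still has the expected rank. A secondary point, easy to overlook, is to confirm that the suitable variable governing the complete-intersection asymptotic expansion can be taken identical to the one appearing in Theorem~\ref{t:main1}, so that the two expansions are genuinely compatible and $L_Y$ is well defined.
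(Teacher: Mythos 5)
Your proposal takes a genuinely different route from the paper, and at its critical step there is a gap. The paper does not introduce a twisted GKZ system, prove a new rank bound, or construct a new transformation $L_Y$. Instead it shows that the quantum Lefschetz modification can be realized by \emph{differential operators that do not involve the wall-crossing variable $y_r$}, and then applies these operators to the already-established relationship of Theorem~\ref{t:finalresult}. Concretely: the hypothesis $E_j\in\overline{C_W}$ forces $b_j^r=\tilde b_j^r=0$, so the operator $\bar\partial_j$ associated to $E_j$ involves only $y_1,\dots,y_{r-1}$. After grouping the summands of $I^\pm$ into pieces $G^\pm_{[k]}$ indexed by the class of the degree modulo $\langle e\rangle$ (equivalently, by the powers of $y_1,\dots,y_{r-1}$), the twisted $I$-function is exactly $\sum_{[k]}\prod_j\prod_a(z\bar\partial_j+az)\,G^\pm_{[k]}$. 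Because these operators live entirely in the $y_1,\dots,y_{r-1}$ directions, they commute with the change of variables \eqref{e:cov2}, with the Laplace transform in $x$, with the passage to the asymptotic expansion as $y_r\to\infty$, and with the $\CC[z,z^{-1}]$-linear map $L$. The theorem then follows from the refined Lemma~\ref{l:mainrefined} with the \emph{same} $L$ as in the toric case.

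The concrete gap in your plan is the assertion that the summand-wise multiplication operator $\mathcal M^-$ commutes with the asymptotic expansion. You justify this by appealing to an ``iterated-residue (Mellin--Barnes) procedure,'' but the paper extracts the asymptotic expansion via regularization, a Laplace transform, and Watson's lemma, not via Mellin--Barnes; and a multiplication whose factor depends on the full degree $k$ (including the $e$-direction) does not obviously commute with that machinery. What makes the commutation hold is precisely the structural fact you do not use: the quantum Lefschetz factor is implemented as a hypergeometric differential operator in $\bar\partial_j$, and $E_j\in\overline{C_W}$ guarantees this operator is blind to $y_r$. Without identifying this, one is left trying to verify directly that a degree-dependent multiplicative modification passes through the Laplace transform and Watson's lemma, which is exactly the kind of thing the paper's argument is designed to avoid. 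Your secondary plan of establishing a twisted GKZ system with the expected rank and a basis of solutions from $I^+_Y$ would work in principle but is considerably heavier than necessary; it also does not by itself remove the commutation gap, since you would still need to show that $\iI_Y$ (however constructed) has the claimed asymptotic expansion.
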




\subsection{Relation to the crepant transformation conjecture}

The crepant transformation conjecture has evolved over time in order to incorporate a successively broader range of cases, with the various incarnations usually differing in their viewpoint as to the central object of comparison.
To place Theorem~\ref{t:main1} in its proper context, we briefly recall some of the evolution of the  conjecture.  
%
We will focus here on the genus zero picture.
%

An early form of the conjecture, due to Ruan \cite{Ru}, identifies the quantum cohomology of $K$-equivalent spaces $X_+$ and $X_-$ after analytic continuation in the quantum parameters.  Later a stronger conjecture, relating the full generating function of genus zero invariants, was given by Bryan--Graber \cite{BG} in the case of crepant resolutions of \emph{hard Lefschetz orbifolds}.  A similar statement has also been developed and proven for ordinary flops by Lee--Lin--Wang--et. al. \cite{LLW, ILLW, LLQW}.  
Finally, the most relevant form of the conjecture for our purposes
was proposed by Coates--Iritani--Tseng \cite{CIT} using physical considerations  from mirror symmetry.  This version states that the overruled Lagrangian cones (see \cite{G3} for an introduction to Givental's formalism) of $X_+$ and $X_-$ should be identified after application of a linear symplectic transformation and suitable analytic continuation.  This version of the conjecture is appealing in that not only does it relate the full set of genus zero Gromov--Witten invariants of $X_+$ and $X_-$, but also should apply in the most generality.

In the current work we choose to use the machinery of generating functions (the so-called $I$-functions) rather than Givental's symplectic formalism, although either one fully determines the other (see \cite{CCIT2}).  This has the benefit of yielding a simple Bryan--Graber type correspondence at the level of $I$-functions, although a price is paid.  Namely, extracting information about individual Gromov--Witten invariants from the $I$-function is often quite involved, although possible in principle--see Section~\ref{s:genfunk}.  

A key ingredient in all three versions of the above conjecture is the analytic continuation of certain functions, and thus implicit in the conjecture is the statement that these functions are analytic in some variable.
 When $X_+$ and $X_-$ are $K$-equivalent, $I^+(y, z)$ and $I^-(\tilde y, z)$ can often be shown to be analytic \cite{CIJ} but this fails for a general birational map $X_+ \dasharrow X_-$.  In fact with the setup as in the previous section, $I^+$ will be analytic with an essential singularity at infinity and $I^-$ will only be a formal power series, with radius of convergence zero.  
 
 We propose that in this case analytic continuation is to be replaced by a power series asymptotic expansion of $I^+$, reflecting the inherent asymmetry of the map $X_+ \dasharrow X_-$.  Furthermore the dimension of $H^*_{CR}(X_+)$ will be greater than that of $H^*_{CR}(X_-)$, and so the map $L$ in Theorem~\ref{t:main1} will not be invertible.  The remarkable fact is that these appear to be the only modifications necessary to translate the crepant transformation conjecture into a statement which holds for much more general birational transformations (e.g. blow-ups).\footnote{The existence of a non-invertible transformation $L$ was already conjectured in \cite{Ru} in this general context, although the role of asymptotic expansion had not yet been realized.}

\begin{remark}
We remark finally that a recent paper by Coates--Iritani--Jiang \cite{CIJ} gives yet another perspective on the crepant transformation conjecture, this time in terms of \emph{quantum $D$-modules}.  In \cite{CIJ}, they prove a correspondence between quantum $D$-modules of 
$K$-equivalent toric orbifolds (and complete intersections) under variation of GIT.
Our current paper is indebted to their work, as we use the same variation of GIT setup to conduct our toric wall crossing.
\end{remark}
\begin{remark}
H.~Iritani has also announced results comparing the genus zero Gromov--Witten invariants of $f: Y \to X$ when $f$ is a toric blow-up or flip.  His results are phrased in the language of quantum $D$-modules, and thus differ somewhat from our main theorem.  It will be interesting to compare the two perspectives.  
\end{remark}

\subsection{Acknowledgements}

The authors wish to thank H. Iritani for many useful conversations, for his talks on the crepant transformation conjecture, and for reading an early version of this paper. They thank Y.-P. Lee for expressing interest in the work and for many helpful conversations.  They also wish to thank Y. Ruan for first explaining to them the crepant transformation conjecture, and for teaching them much of what they know about Gromov--Witten theory.   
M. S. was partially supported by NSF RTG Grant DMS-1246989.

\section{Gromov--Witten theory}

In this section we recall those definitions and properties of Gromov--Witten theory which play a role in the correspondence.  This section also serves to set notation.  More details of the theory can be found in \cite{AGV} or \cite{CK}.

\subsection{Invariants}

Let $X$ be a smooth, proper, Deligne--Mumford stack with projective coarse moduli space.  Define the inertia stack
 $IX$ as the fiber product
\[
\xymatrix{
IX \ar[r] \ar[d] & X \ar[d]^\Delta \\
X\ar[r]^\Delta & X \times X \\ 
}
\]
where $\Delta$ is the diagonal map.  $IX$ parametrizes pairs $(x,g)$ where $x \in X$ and $g$ is an element of the isotropy group of $x$.  In the case where $X$ is a stack quotient of the form $[V/G]$ with $V$  a smooth variety and $G$ an abelian group, 
\[ IX = \coprod_{g \in G} [V^g/G],
\]
where $V^g$ denotes the fixed locus of $V$ with respect to $g$.  There is a distinguished component corresponding to $g=  e$ which is isomorphic to $X$.  We call this the \emph{untwisted sector}.  The other components are called
\emph{twisted sectors}.  
In this paper we will be concerned with the case where $G$ is an algebraic torus.  

\begin{notation}
Unless otherwise stated, the coefficients of our cohomology groups are always in $\CC$.
\end{notation}

The relevant cohomology theory for use in orbifold Gromov--Witten theory is Chen--Ruan cohomology:
\begin{definition}[\cite{ChenR1, ChenR2}]
The \emph{Chen--Ruan orbifold cohomology} of $X$ is defined, as a vector space, as:
\[H^*_{CR}(X) := H^*(IX).\]
\end{definition}
The inclusion of the untwisted sector $X \subset IX$ into the inertia stack allows us to view $H^*(X)$ as a summand of $H^*_{CR}(X)$.

There is a natural involution $inv:IX \to IX$ which sends the point $(x,g)$ to $(x, g^{-1})$.  This endows the Chen--Ruan cohomology with a  pairing given by 
\[(\alpha, \beta)_{CR} := \int_{IX} \alpha \cup inv^*(\beta), \hspace{1 cm}  \alpha, \beta \in H^*_{CR}(X).\]

Fix a genus $g$, an integer $n \geq 0$, and a degree $d \in NE(X)_\ZZ = NE(X) \cap H_2(|X|; \ZZ)$; there exists a moduli space $\sMbar_{g, n}(X, d)$ parametrizing stable maps $f: C \to X$, where $C$ is a pre-stable genus-$g$ orbi-curve \cite{AGV} with $n$ marked points, and $f$ is a representable stable map of degree $d$.  The torus action on $X$ induces an action on  $\sMbar_{g, n}(X, d)$.  Although gerbe structure prevents the global existence of evaluation maps $ev_i: \sMbar_{g, n}(X, d) \to IX$, in fact the pullback 
\[ev_i: H^*_{CR}(X) \to H^*(\sMbar_{g, n}(X, d)),\]
can still be defined by identifying the cohomology of $IX$ with that of $\overline{IX}$, the \emph{rigidified inertia stack} (see \cite{AGV} for details).
For each marked point $1 \leq i \leq n$, we define the $i$th $\psi$-class, $\psi_i \in H^*(\sMbar_{g, n}(X, d))$,  to be the first Chern class of the $i$th universal cotangent line bundle $L_i$.  Given classes $\alpha_1, \ldots, \alpha_n \in H^*_{CR}(X)$ and integers $k_1, \ldots, k_n \geq 0$, we define the Gromov--Witten invariant
\[ \br{\alpha_1 \psi_1^{k_1}, \ldots, \alpha_n \psi_n^{k_n}}^X_{g,n,d} := \int_{[\sMbar_{g, n}(X, d)]^{vir}} \bigcup_{i=1}^n ev_i^*(\alpha_i) \cup \psi_i^{k_i}, 
\]
where $[\sMbar_{g, n}(X, d)]^{vir}$ denotes the \emph{virtual fundamental class} on $\sMbar_{g, n}(X, d)$ \cite{AGV}.

\subsection{Quantum cohomology and generating functions}\label{s:genfunk}

Let $\bt = \sum_{i \in I}t^i \phi_i$ represent a general point in  $H^*_{CR}(X)$ for $\{\phi_i\}_{i \in I}$ a choice of basis.  
Given  $\alpha_1, \ldots, \alpha_n \in H^*_{CR}(X)$, and integers $k_1, \ldots, k_n \geq 0$ as above, we define the double bracket
\[\br{\br{ \psi_1^{a_1} \alpha_1, \ldots, \psi_n^{a_n} \alpha_n }}^X:=
\sum_{d \in NE(X)_\ZZ} \sum_{k = 0}^\infty \frac{1}{k!}
\br{ \psi^{a_1} \alpha_1, \ldots, \psi^{a_n} \alpha_n, \bt, \ldots, \bt
}_{0, n+k, d}^X.\]
In general the sum on the right will converge only if we view the parameters $\{t^i\}_{i \in I}$ as formal variables.  We denote $\CC[[\{t^i\}_{i \in I}]]$ simply by $\CC[[\bt]]$.
%

\begin{definition}
The \emph{quantum cohomology} of $X$, 
\[*_{\bt}: H^*_{CR}(X) \times H^*_{CR}(X) \to H^*_{CR}(X)[[ \bt ]],
\]
is defined by the equality
\[ ( \alpha *_{\bt} \beta, \gamma)_{CR} = \br{\br{ \alpha, \beta, \gamma}}^X
\] 
for all $\alpha, \beta, \gamma \in H^*_{CR}(X)$.
\end{definition}
As a consequence of the \emph{WDVV} relations \cite{AGV, CK}, the operation $*_{\bt}$ endows $H^*_{CR}(X)[[ \bt]]$ with  an associative product.  
%

The \emph{ $J$-function}, a cohomology valued generating function of Gromov--Witten invariants of $X$ first introduced by Givental \cite{G1}, is useful in studying quantum cohomology. 
\begin{definition}
The $J$-function of $X$ is defined as 
\[J^X(\bt, z) = z +  \bt + \sum_{i \in I} \br{ \br{ \frac{\phi_i}{z - \psi_1} } }^X \phi^i
\]
where $\{\phi^i\}_{i \in I}$ is the dual basis to $\{\phi_i\}_{i \in I}$ and $1/(z - \psi_1)$ denotes the corresponding expansion in $1/z$.  \end{definition}
The $J$-function naturally takes values in $H^*_{CR}(X)[[ \bt]]((z^{-1}))$.
We note that the
quantum cohomology of $X$ is fully determined by the $J$-function.
The key point is that $J^X$ satisfies the system of partial differential equations
\[ z \frac{\partial}{\partial t^i}\frac{\partial}{\partial t^j} J^X(\bt, z)
= \sum_{k \in I}(\phi_i *_{\bt} \phi_j, \phi^k) \frac{\partial}{\partial t^k}J^X(\bt, z),
\]
which follows from the \emph{topological recursion relations} \cite{CK}.  
The $J$-function can be upgraded to an endomorphism 
${\bf J}^X(\bt, z): H^*_{CR}(X)((z^{-1}))[[\bt]] \to H^*_{CR}(X)((z^{-1}))[[\bt]]$ 
given by:
\[{\bf J}^X(\bt, z): Z(\bt, z) \mapsto 
Z(\bt, z) + \sum_{i \in I} 
\br{ \br{ \frac{\phi_i}{z - \psi_1}, Z(\bt, z)
}}^X \phi^i
.\]
Via the \emph{string equation} \cite{AGV, CK}, $z{\bf J}^X(\bt, z)(1)$ can be seen to equal the original $J$-function.  This motivates the following general definition:
\begin{definition}\label{d:I function}
Let $q^1, \ldots, q^r$ be formal parameters.
An \emph{$I$-function} for $X$ is any cohomology-valued function of the form
\begin{equation}\label{Ifctn}
I^X(\bq, z) = z{\bf J}^X(\btau(\bq), z)(Z(\bq, z)),
\end{equation}
such that $Z(\bq, z) \in H^*_{CR}(X)[z][[\bq]]$ contains only \emph{positive} powers of $z$.
The map $\bq \mapsto \btau(\bq)$ is called the \emph{mirror map}.
\end{definition}

\begin{definition}\label{d:bigI}
We say an $I$-function $I^X(\bq, z)$ is \emph{big} if there exist differential operators $\{P_i(z, z \frac{\partial}{\partial q^j})\}_{i \in I}$ which are polynomial in $z$ and $z \frac{\partial}{\partial q^j}$ such that 
\begin{equation}\label{e:bigbig} z^{-1}P_i\left(I^X(\bq, z)\right) = \phi_i + O(\bq).\end{equation}
\end{definition}     
Using a form of Birkhoff factorization, one can prove (see e.g. \cite{AS, CIJ}):
\begin{lemma}\label{l:BF}
A big $I$-function $I^X(\bq, z)$ explicitly determines the pullback of  $J^X$  under the mirror map.
\end{lemma}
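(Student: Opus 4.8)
The plan is to invert the defining relation $I^X(\bq, z) = z\,\mathbf{J}^X(\btau(\bq), z)\big(Z(\bq, z)\big)$ by a Birkhoff‑factorization argument, using the ``big'' hypothesis to control the powers of $z$. Two standard properties of the operator $\mathbf{J}^X(\bt, z)$ are needed. First, \emph{unitriangularity}: as an $\on{End}\big(H^*_{CR}(X)\big)$-valued function of $z$ one has $\mathbf{J}^X(\bt, z) = \on{id} + O(z^{-1})$, which is immediate from the definition via double brackets (each $1/(z-\psi_1)$ contributes only negative powers of $z$). Second, \emph{flatness}: $\mathbf{J}^X(\bt, z)$ is a fundamental solution of the quantum connection, equivalently $z\,\partial_{t^i}\big(\mathbf{J}^X(\bt, z)(\alpha)\big) = \phi_i *_{\bt}\big(\mathbf{J}^X(\bt, z)(\alpha)\big)$ for $\alpha$ constant in $\bt$, a consequence of the topological recursion relations — the same relations that give the displayed second-order equation for $J^X$. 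The key consequence, which I would isolate as a preliminary step, is that the free $\CC[z][[\bq]]$-module
\[
T(\bq) \;:=\; \mathbf{J}^X\big(\btau(\bq), z\big)\Big(H^*_{CR}(X)[z][[\bq]]\Big)
\]
is stable under multiplication by $z$ and under each operator $z\,\partial/\partial q^j$. Stability under $z$ is clear; stability under $z\,\partial/\partial q^j$ follows by differentiating $\mathbf{J}^X(\btau(\bq), z)(p)$ through the chain rule in $\btau$, using flatness to rewrite $\partial_{t^a}\mathbf{J}^X$ as $z^{-1}(\phi_a *_{\bt})\circ\mathbf{J}^X$ and unitriangularity to see that quantum multiplication by $\phi_a$ preserves $T(\bq)$, the resulting extra factor of $z^{-1}$ being absorbed by the $z$ in the operator.

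Granting this, the argument proceeds as follows. Since $z^{-1}I^X(\bq, z) = \mathbf{J}^X(\btau(\bq), z)\big(Z(\bq, z)\big)$ lies in $T(\bq)$ (because $Z \in H^*_{CR}(X)[z][[\bq]]$) and $P_i(I^X) = z\,P_i\big(z^{-1}I^X\big)$ — as $z$ commutes with the building blocks $z$ and $z\,\partial/\partial q^j$ of $P_i$ — the functions
\[
S_i(\bq, z) \;:=\; z^{-1}P_i\big(I^X(\bq, z)\big) \;=\; P_i\big(z^{-1}I^X(\bq, z)\big), \qquad i \in I,
\]
again lie in $T(\bq)$. Write $S_i = \mathbf{J}^X(\btau(\bq), z)\big(A_i(\bq, z)\big)$ with $A_i \in H^*_{CR}(X)[z][[\bq]]$ uniquely determined, assemble the $S_i$ into a fundamental matrix $\mathsf{S}(\bq, z)$ and the $A_i$ into $M(\bq, z) \in \on{GL}\big(H^*_{CR}(X)\big)[z][[\bq]]$, so that $\mathsf{S}(\bq, z) = \mathsf{J}\big(\btau(\bq), z\big)\, M(\bq, z)$, where $\mathsf{J}$ is the matrix of $\mathbf{J}^X$. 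The ``big'' hypothesis \eqref{e:bigbig} gives $S_i = \phi_i + O(\bq)$, hence $\mathsf{S}(\bq, z) = \on{id} + O(\bq)$; in particular $\mathsf{S}(0, z) = \on{id}$ lies in the big cell. Together with $\mathsf{J}(\btau(\bq), z) = \on{id} + O(z^{-1})$ and $M$ regular in $z$, this exhibits $\mathsf{S} = \mathsf{J}(\btau(\bq), z)\, M$ as a Birkhoff factorization; by its uniqueness over the local ring $\CC[[\bq]]$, the left factor $\mathsf{J}(\btau(\bq), z)$ is determined by $\mathsf{S}$, hence by $I^X$ and the chosen operators $P_i$. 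The pullback of the $J$-function is then explicit, $J^X\big(\btau(\bq), z\big) = z\,\mathbf{J}^X(\btau(\bq), z)(1)$; and if one also wants $\btau$ itself, it is recovered by integrating $\partial_{q^j}\btau(\bq) = \lim_{z \to \infty} z\,\partial_{q^j}\big(\mathbf{J}^X(\btau(\bq), z)(1)\big)$ — using the string axiom $\phi_a *_{\btau} 1 = \phi_a$ — with the integration constant pinned down by the leading form of $I^X$.

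The main obstacle is the preliminary step: proving cleanly that $T(\bq)$ is preserved by the operators $z\,\partial/\partial q^j$ — equivalently, that applying the $P_i$ to $z^{-1}I^X$ introduces no new poles in $z$, so that $A_i \in H^*_{CR}(X)[z][[\bq]]$ and the factorization $\mathsf{S} = \mathsf{J}(\btau)\, M$ genuinely has a regular-in-$z$ right factor — together with checking that $\mathsf{S}(0,z) = \on{id}$ so that the uniqueness of Birkhoff factorization applies. Both points rest on the interplay of unitriangularity of $\mathbf{J}^X$, flatness of the quantum connection, and the $O(\bq)$-normalization built into the ``big'' hypothesis; once these are in place the remainder is formal and follows the pattern in \cite{AS, CIJ}.
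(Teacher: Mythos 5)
Your proposal follows essentially the same route as the references the paper cites for this lemma (\cite{AS, CIJ}): the paper gives no proof of its own, writing only ``Using a form of Birkhoff factorization, one can prove $\ldots$,'' and what you've written is exactly the Birkhoff-factorization argument being alluded to. The overall structure — apply the $P_i$ to $z^{-1}I^X$, observe that the results form a fundamental system normalized to $\mathrm{id}+O(\bq)$, factor as $\mathsf{J}\cdot M$ with $\mathsf{J} = \mathrm{id}+O(z^{-1})$ and $M$ regular in $z$, and invoke uniqueness of Birkhoff factorization to pin down $\mathsf{J}$ — is correct and is the intended argument.

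One point in your sketch, however, is justified too quickly. You claim that $T(\bq)$ is stable under $z\,\partial/\partial q^j$ because ``unitriangularity [shows] that quantum multiplication by $\phi_a$ preserves $T(\bq)$.'' Unitriangularity and flatness alone do not give this: writing $\mathbf{J}^X = \mathrm{id}+J_1 z^{-1}+J_2 z^{-2}+\cdots$, one finds $(\mathbf{J}^X)^{-1}\,(\phi_a *_{\bt})\,\mathbf{J}^X = \phi_a*_{\bt} + [\phi_a*_{\bt},\,J_1]\,z^{-1}+\cdots$, and the vanishing of the negative-order tail is a nontrivial identity, not a formal consequence of $\mathbf{J}^X=\mathrm{id}+O(z^{-1})$. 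The fact that is actually needed — that $T(\bq)$ coincides with the tangent space $T_{I^X(\bq,z)}\cc{L}_X$ to Givental's Lagrangian cone, and that this tangent space is a $\CC[z]$-module preserved by $z\,\partial/\partial q^j$ applied along any family lying on the cone — is the \emph{overruled} property of $\cc{L}_X$, a theorem whose proof uses the topological recursion relations together with the string and dilaton equations, not merely the leading behaviour of $\mathbf{J}^X$ in $z^{-1}$. You do flag ``proving cleanly that $T(\bq)$ is preserved'' as the main obstacle, so the gap is acknowledged; the fix is simply to invoke the overruled cone structure at that point rather than unitriangularity. Once that substitution is made, the rest of the proposal (including the passage from the factorization back to $\btau$ via the string equation) is sound and matches the cited references.
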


As a consequence, if the image of the mirror map $\bq \to \btau(\bq)$ generates the Chen--Ruan cohomology of $X$, $I^X(\bq, z)$ fully determines the quantum cohomology of $X$.  In many cases an explicit description of the $J$-function is difficult to obtain, and it is more convenient to work with $I$-functions.  This is the approach taken in this paper.

\section{Toric setup}\label{s:3}

We construct our toric varieties as stack quotients via GIT.  The setup is almost identical to that in \cite{CIJ}, so for the readers' convenience we use the same notation.
The initial data consists of 
\begin{itemize}
\item A torus $K \cong (\CC^*)^r$;
\item the lattice $\LL = \Hom(\CC^*, K)$ of co-characters of $K$;
\item a set of characters $D_1, \ldots, D_m \in \LL^\vee = \Hom(K, \CC^*)$;
\item a choice of a \emph{stability condition} $\omega \in \LL^\vee \otimes \RR$.
\end{itemize}
Given the above, the map $(D_1, \ldots, D_m): K \to (\CC^*)^m$ induces an action of  $K$ on $\CC^m$.  
Given $I \subset \set{1, \ldots, m}$, we denote by $\overline I$ the complement of $I$.  Define $\angle I$ to be the subset 
\[\angle I := \set{ \sum_{i \in I} a_i D_i | a_i \in \RR_{>0}} \subset \LL^\vee \otimes \RR,\] and let $(\CC^*)^I \times (\CC)^{\overline I} := \set{ (z_1, \ldots, z_m) | z_i \neq 0 \text{ for } i \in I}.$

 Let $\sA_\omega := \set{I \subset \set{1, \ldots, m}| \omega \in \angle I}$.  Consider the open set \[U_\omega := \bigcup_{I \in \sA_\omega}  (\CC^*)^I \times (\CC)^{\overline I}.\]  We define the toric stack \[X_\omega := [U_\omega / K],\] where brackets denote that we are taking the stack quotient.  


From this perspective, $\LL^\vee \otimes \RR$ may be viewed as the set of stability parameters for the GIT quotient of $\CC^m$ by $K$.  
We restrict ourselves to $\omega$ in the non-negative span $\sum_{i = 1}^m \RR_{\geq 0} D_i$ so that $X_\omega$ is non-empty.  
This set acquires a wall and chamber structure as follows.  For  such $\omega  \in \sum_{i = 1}^m \RR_{\geq 0} D_i$, define
\[C_\omega := \bigcap_{I \in \sA_\omega} \angle I \subset \LL^\vee \otimes \RR.\]   Note that if $\omega ' \in C_\omega$, $X_\omega$ and $X_{\omega '}$ are equal.  Then the \emph{chambers} of $\sum_{i = 1}^m \RR_{\geq 0} D_i$ are $C_\omega$ of maximal dimension $r$, and the \emph{walls} are formed by higher codimension $C_\omega$.  If $C_\omega$ is of maximal dimension, $X_\omega$ will be a Deligne--Mumford stack.  We also assume always that $X_\omega$ is has proper coarse moduli space.

Fix a stability condition $\omega$ such that $C_\omega$ is of maximal dimension.  
The  cohomology  $H^*( X_\omega)$ is generated by the divisor classes $u_i := \{z_i = 0\}$  where $z_i$ is the $i$th coordinate of $\CC^m \supset U_\omega$, viewed as a homogeneous coordinate on $X_\omega$.  The cohomology of $X_\omega$ is computed explicitly in \cite{BCS}, although we will not require it here.
%
%
There exists a linear map $\theta_\omega: \LL^\vee \otimes \RR \to H^2( X_\omega)$ defined such that 
\begin{equation}\label{e:theta}
\theta_\omega(D_i) = u_i ,
\end{equation} this is well defined by Equation~4.7 of \cite{CIJ}.

For $f \in \LL \otimes \QQ$, 
define $I_f := \{ i \in \{1, \ldots, m\}| D_i \cdot f \in \ZZ\} $. Consider the set 
\[ \KK_\omega:=\{f | I_f \in \sA_\omega\} \subset \LL \otimes \QQ.\]  Then $\KK_\omega /\LL$ indexes the fixed point components of $IX_\omega$: $f \in \KK_\omega /\LL$ corresponds to the group element 
\[\exp(f) := (\exp(2 \pi i D_1 \cdot f), \ldots, \exp(2 \pi i D_r \cdot f)) \in K,
\]
which in turn corresponds to the component $X_\omega^f := [U_\omega \cap (\CC^m)^{\exp(f)}/K] \subset IX_\omega$.  The Chen--Ruan orbifold cohomology of $X_\omega$ is then given as a graded vector space by
\[H^*_{CR}(X_\omega) = \bigoplus_{f \in \KK/\LL} H^*(X_\omega^f).\]
Let $\ii_f$ denote the class in $H^*_{CR}(X_\omega)$ corresponding to the fundamental class of $X_\omega^f$.

\subsection{Wall crossing}\label{s:wc}

Choose stability conditions $+$ and $-$ in $\LL^\vee \otimes \RR$ such that $C_+$ and $C_-$ are of maximal dimension and are separated by a codimension one wall.  
Let $W$ denote the hyperplane in $\LL^\vee \otimes \RR$ separating $C_+$ and $C_-$,
and let  $\overline{C_W} := W \cap \overline{C_+} = W \cap \overline{C_-}$ denote the (closure of the) corresponding wall.  Let $e \in \LL$ be a primitive generator of $W^\perp$.  We assume that $X_+$ and $X_-$ are \emph{not} $K$-equivalent, or equivalently, 
that we can choose $e$ such that $ \sum_{j=1}^m D_j \cdot e > 0$.  
We may assume, after possibly switching the labeling of the stability conditions $+$ and $-$, that 
$\omega_+  \cdot e> 0$.  
One can construct (see Section~6.3 of \cite{CIJ}) a common toric blowup
\begin{equation}\label{bigpic}\begin{tikzcd}
& \widetilde X \arrow[swap]{dl}{\pi_+} \arrow{dr}{\pi_-}&\\
X_+ \arrow[dashed]{rr}{\phi}& & x_-,
%
\end{tikzcd}
\end{equation}
such that under our assumptions, $\pi_+^* (K_{X_+}) - \pi_-^* (K_{X_-})$ is effective.  

Let $\widetilde \LL_{\pm}$ denote the lattice in $\LL \otimes \QQ$ generated by $\KK_{\pm}$.  In \cite{CIJ} it is proven that:
\begin{lemma}[\cite{CIJ} Section 5.3]
There exist bases $\{p_1^+, \ldots, p_r^+\}$ and $\{p_1^-, \ldots, p_r^-\}$ of $\widetilde \LL_+^\vee$ and $\widetilde \LL_-^\vee$ respectively such that:
\begin{itemize}
\item $p_i^{\pm}$ lies in $\overline{C_{\pm}}$ for $1 \leq i \leq r$;
\item $p_i^+ = p_i^- \in \overline{C_W}$ for $1 \leq i \leq r-1$;
\end{itemize}
\end{lemma}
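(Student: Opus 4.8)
The plan is to produce the common initial segment $p_1,\dots,p_{r-1}$ of the two bases inside the wall $\overline{C_W}$, and then to complete each lattice $\widetilde\LL_\pm^\vee$ by one further vector that can be arranged to lie in the surrounding chamber $\overline{C_\pm}$. The only genuinely nontrivial input is the identity $\widetilde\LL_+^\vee\cap W=\widetilde\LL_-^\vee\cap W$; granting it, the rest is elementary lattice geometry.

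First, the crucial point. Since $\LL\subseteq\widetilde\LL_\pm$ with finite index, each $\widetilde\LL_\pm^\vee$ is a finite-index sublattice of $\LL^\vee$, and $\widetilde\LL_\pm^\vee\cap W=\{p\in\LL^\vee:\ p\cdot e=0,\ p\cdot f\in\ZZ\text{ for all }f\in\KK_\pm\}$. The content is that this set is independent of the sign; intuitively it should be, because $p$ pairs trivially with $e$ and the only difference between $\KK_+$ and $\KK_-$ sits in the $e$-direction, along which the two chambers are exchanged. To make this precise I would partition $\{1,\dots,m\}=S_+\sqcup S_0\sqcup S_-$ according to the sign of $D_i\cdot e$ (with $\omega_+\cdot e>0$ by our normalization, so $S_+\ne\varnothing$ by non-$K$-equivalence): the part of the data indexed by $S_0$ is common to the two sides and is untouched by adding multiples of $e$, so everything reduces to a comparison over the ``flipping'' indices $S_+\cup S_-$, which is governed by the precise way the wall-crossing turns $\sA_+$ into $\sA_-$. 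Carrying out this comparison of $\sA_+$ and $\sA_-$ near the wall is the technical heart of the argument, and I expect it to be the main obstacle: it is the one step that cannot be done by a purely formal argument.

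Granting $\Lambda_W:=\widetilde\LL_+^\vee\cap W=\widetilde\LL_-^\vee\cap W$, I would build the shared part of the bases. The wall $\overline{C_W}=\overline{C_+}\cap W=\overline{C_-}\cap W$ is a full-dimensional rational polyhedral cone in $W$, and any full-dimensional rational cone contains a $\ZZ$-basis of its ambient lattice: choose a primitive lattice point $p_1$ in the interior of the cone, observe that the image of the cone under the projection $\Lambda_W\to\Lambda_W/\ZZ p_1$ is all of $(\Lambda_W/\ZZ p_1)\otimes\RR$, induct to obtain a basis of the quotient, and translate the lifts of its members far enough along $p_1$ to remain inside the cone. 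Let $p_1,\dots,p_{r-1}\in\overline{C_W}$ be such a basis of $\Lambda_W$, and set $p_i^+=p_i^-:=p_i$; these satisfy the second bullet. Moreover, since $W$ is a rational hyperplane the quotient $\widetilde\LL_\pm^\vee/\Lambda_W$ is torsion-free of rank one, hence $\cong\ZZ$, so $\{p_1,\dots,p_{r-1}\}$ extends to a $\ZZ$-basis of $\widetilde\LL_\pm^\vee$ by adjoining any lift $p_r^\pm$ of a generator.

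It remains to choose the lift $p_r^\pm$ inside $\overline{C_\pm}$. Such a lift necessarily has $p_r^\pm\cdot e=\pm d_\pm$, with the sign matching the chamber (as $\overline{C_+}\subseteq\{p\cdot e\ge0\}$ and $\overline{C_-}\subseteq\{p\cdot e\le0\}$), where $d_\pm>0$ generates the image of $p\mapsto p\cdot e$ on $\widetilde\LL_\pm^\vee$; so what is needed is a point of $\widetilde\LL_\pm^\vee$ of $e$-height $\pm d_\pm$ lying in $\overline{C_\pm}$. Consider the slice $P:=\overline{C_\pm}\cap\{p\cdot e=\pm d_\pm\}$: it is nonempty (it contains a positive rescaling of $\omega_\pm$), and its recession cone is $\overline{C_\pm}\cap\{p\cdot e=0\}=\overline{C_W}$, which is full-dimensional in $W$; hence $P$ contains a translate $p_0+\overline{C_W}$ with $p_0$ rational. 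Fixing $q_0\in\widetilde\LL_\pm^\vee$ of $e$-height $\pm d_\pm$ and writing $p_0-q_0=\sum_i c_i p_i$ with $c_i\in\QQ$, the point $q_0+\sum_i\lceil c_i\rceil p_i$ lies in $\widetilde\LL_\pm^\vee$, has $e$-height $\pm d_\pm$, and lies in $p_0+\overline{C_W}\subseteq P\subseteq\overline{C_\pm}$. Taking it to be $p_r^\pm$ yields a $\ZZ$-basis $\{p_1,\dots,p_{r-1},p_r^\pm\}$ of $\widetilde\LL_\pm^\vee$ with all members in $\overline{C_\pm}$, which completes the construction.
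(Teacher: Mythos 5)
The paper does not prove this lemma itself; it is quoted directly from~\cite{CIJ}, Section~5.3, so there is no in-paper argument to compare your proposal against. That said, your outline is the right one and does match the route taken in the cited reference: identify the wall lattice, build the shared part of the basis inside $\overline{C_W}$, and complete to a $\ZZ$-basis of each $\widetilde\LL_\pm^\vee$ by one further vector pushed into $\overline{C_\pm}$. The two concluding paragraphs are correct and elementary: the inductive construction of a $\ZZ$-basis of a full-rank lattice inside a full-dimensional rational cone is standard, and the rounding trick $p_r^\pm := q_0 + \sum_i\lceil c_i\rceil p_i$ (translating a rational point of the slice $\overline{C_\pm}\cap\{p\cdot e=\pm d_\pm\}$ by an integral combination of the $p_i\in\overline{C_W}$ to land on the lattice) does produce a generator of $\widetilde\LL_\pm^\vee/\Lambda_W$ lying in $\overline{C_\pm}$.

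The genuine issue is the one you flag yourself: the identity $\widetilde\LL_+^\vee\cap W=\widetilde\LL_-^\vee\cap W$ is not optional decoration but the entire content of the lemma (conversely, given the lemma, that identity follows because the first $r-1$ basis vectors span $\widetilde\LL_\pm^\vee\cap W$ on each side). Your discussion of why it ``should'' hold --- splitting $\{1,\dots,m\}$ by the sign of $D_i\cdot e$ and observing that only the $e$-direction flips --- is a reasonable heuristic, but as written it is not a proof: one has to compare $\sA_+$ with $\sA_-$ and check that the resulting conditions ``$D_i\cdot f\in\ZZ$ for all $f\in\KK_+$'' and ``$\ldots$ for all $f\in\KK_-$'' cut out the same sublattice of $W$. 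That concrete comparison is exactly the technical input supplied by \cite{CIJ}. As it stands, your proposal is conditional on this identity; to make it self-contained you would need to reproduce that argument, or else cite it.
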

For $d $ in $ \LL$, resp. $\widetilde \LL_{\pm}$, let $y^d$ denote the corresponding element in $\CC[\LL]$, resp. $\CC[\widetilde \LL_{\pm}]$.  We have an inclusion \[\CC[C_+^\vee \cap \LL] \to 
\CC[C_+^\vee \cap \widetilde \LL_+] \to \CC[y_1, \ldots, y_r]\] given by $ y^d \mapsto \prod_{i=1}^r y_i^{p_i^+ \cdot d}$. Similarly, the inclusion \[\CC[C_-^\vee \cap \LL] \to  \CC[C_-^\vee \cap \widetilde \LL_-] \to \CC[\tilde y_1, \ldots, \tilde y_r]\] is given by $y^d \mapsto \prod_{i=1}^r \tilde y_i^{p_i^- \cdot d}$.

The coordinates are related by the change of variables
\begin{eqnarray}\label{e:cov}
\tilde y_i &=& y_i y_r^{c_i} \hspace{1 cm} 1 \leq i \leq r-1 \\
\tilde y_r &=& y_r^{-c}  \nonumber
\end{eqnarray}
where  $c = - p_r^+\cdot e /  p_r^- \cdot e  \in \QQ_{>0}$ and $c_i \in \QQ$ are determined by the change of basis from $\{p_i^+\}$ to $\{p_i^-\}$.  Note that $p_r^+\cdot e > 0$ while $p_r^- \cdot e < 0$.

\subsection{The $I$-function}

To simplify notation, for $k \in \LL \otimes \QQ$,
 let $y^k = \prod_{i=1}^r y_i ^{p^+_i \cdot k}$ and  $\tilde y^k = \prod_{i=1}^r \tilde y_i ^{p^-_i \cdot k}$.  Note that $y^k = \tilde y^k$ under the change of variables \eqref{e:cov}, but we distinguish between the two when want to emphasize a particular coordinate system.  
Define \[\sigma_{\pm} := \theta_{\pm}(p_i^{\pm} \log(y_i)) ,\] where $\theta_{\pm}$ is as defined in \eqref{e:theta}.  
Define the functions $I^+(y, z)$ and $I^-(\tilde y, z)$ in $H^*_{CR}(X_+)[z, z^{-1}][\log y_1, \ldots, \log y_r][[y_1, \ldots, y_r]]$ \\
and 
$H^*_{CR}(X_-)[z, z^{-1}][\log \tilde y_1, \ldots, \log \tilde y_r][[\tilde y_1, \ldots, \tilde y_r]]$ respectively by:
\begin{eqnarray}\\
\nonumber
I^+(y ,z) &:=& ze^{\sigma_+/z} \sum_{k \in \KK_+} y^k \left(
\prod_{j=1}^m \frac{\prod_{a: \langle a \rangle = \langle D_j \cdot k \rangle, a \leq 0} (u_j + az)}
{\prod_{a: \langle a \rangle = \langle D_j \cdot k \rangle, a \leq D_j \cdot k} (u_j + az)}
\right) \ii_{[-k]},
\\
I^-(\tilde y ,z) &:=& ze^{\sigma_-/z} \sum_{k \in \KK_-} \tilde y^k \left(
\prod_{j=1}^m \frac{\prod_{a: \langle a \rangle = \langle D_j \cdot k \rangle, a \leq 0} (u_j + az)}
{\prod_{a: \langle a \rangle = \langle D_j \cdot k \rangle, a \leq D_j \cdot k} (u_j + az)}
\right) \ii_{[-k]}.
\nonumber \end{eqnarray}
For $k \in \KK_{\pm}$, let $I^{\pm}_k$ denote the corresponding summand:
\begin{align}I^+( y, z) &= ze^{\sigma_+/z}  \sum_{k \in \KK_+}  y^k I^+_k, \label{e:parts} \\ \nonumber
I^-(\tilde y, z) &= ze^{\sigma_-/z}  \sum_{k \in \KK_-} \tilde y^k I^-_k.
\end{align}

$I^{\pm}$ fully determines the $J$-function $J^{\pm}$ of $X_{\pm}$ in the sense of Definition~\ref{d:bigI}:
\begin{theorem}[\cite{CCIT2}]
$I^{\pm}$ is a big $I$-function for $X_{\pm}$.
\end{theorem}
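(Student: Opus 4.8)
The assertion bundles two things: that $I^{\pm}$ is an \emph{$I$-function} in the sense of Definition~\ref{d:I function}, and that, being one, it is \emph{big} in the sense of Definition~\ref{d:bigI}. The plan is to establish these in turn, the first by the toric mirror theorem and the second by a direct analysis of the differential operators generated from the hypergeometric series $I^{\pm}$.

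For the first point I would invoke the mirror theorem for toric Deligne--Mumford stacks. Concretely: present $X_{\pm}=[U_{\pm}/K]$ as in Section~\ref{s:3}, form Givental's graph space --- the moduli of genus-$0$, one-pointed stable maps to $X_{\pm}\times\PP^1$ of class $(d,1)$ --- and run $\CC^*$-fixed-point localization with respect to the standard $\CC^*$ on the $\PP^1$ factor. The fixed loci are indexed by how the curve class and the marked point distribute over $0$ and $\infty\in\PP^1$; summing the contributions of those loci along which the entire class bubbles off over $0$ reproduces, degree by degree in the K\"ahler variable and sector by sector, precisely the factor $\prod_{j}\big(\prod_{a:\langle a\rangle=\langle D_j\cdot k\rangle,\,a\le 0}(u_j+az)\big)\big/\big(\prod_{a:\langle a\rangle=\langle D_j\cdot k\rangle,\,a\le D_j\cdot k}(u_j+az)\big)$ appearing in $I^{\pm}_k$. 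Matching this against the recursion-and-polynomiality criterion for points on Givental's cone shows that, after absorbing the factor $e^{\sigma_{\pm}/z}$ into the mirror map via the divisor equation, $I^{\pm}$ has the form $z\,\mathbf{J}^{X_{\pm}}(\btau(\bq),z)(Z(\bq,z))$ with $Z$ a positive-power-of-$z$ series --- i.e.\ is an $I$-function. This is exactly the content of \cite{CCIT2}, so in the write-up I would cite it; the substance is the localization computation together with the book-keeping for the rigidified inertia, representability of the maps, and the identification of the twisted sectors $X_{\pm}^f$ with the cosets $\KK_{\pm}/\LL$.

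Granting this, bigness is a finite, essentially combinatorial verification, along the lines of \cite{AS} and \cite{CIJ}. Write $I^{\pm}=z\,e^{\sigma_{\pm}/z}\sum_{k\in\KK_{\pm}}y^kI^{\pm}_k$. The logarithmic K\"ahler derivatives $z\,y_j\partial/\partial y_j$ (which are of the form permitted in Definition~\ref{d:bigI} after the usual identification of the $q$-variables) act on the $k$-th summand through the divisor-plus-scalar operator $\theta_{\pm}(p_j^{\pm})+z\,(p_j^{\pm}\cdot k)$, so polynomial combinations act almost diagonally in $k$ and, after stripping $e^{\sigma_{\pm}/z}$, realize cup product by the divisors $\theta_{\pm}(p_j^{\pm})$; since the untwisted cohomology $H^*(X_{\pm})$ is generated as a ring by these divisors, such combinations applied to $I^{\pm}$ recover, modulo $O(\bq)$ and powers of $z$, every untwisted class. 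For a twisted sector $X_{\pm}^f$, take $k_f\in\KK_{\pm}$ the box representative of $f$ with $0\le D_j\cdot k_f<1$ for all $j$; the summand $y^{k_f}I^{\pm}_{k_f}$ has $\bq$-order exactly $y^{k_f}$, leading cohomology class a nonzero multiple of $\ii_f$, and minimal $\bq$-order among all summands whose leading class lies in $H^*(X_{\pm}^f)$. A monomial in the $z\,y_j\partial/\partial y_j$ selecting the exponent $k_f$, combined with the divisor operators and the fact that each $X_{\pm}^f$ is again a toric stack with divisor-generated cohomology, then produces every class of every sector to leading order. Ordering all these operators by the $\bq$-monomial they select makes the matrix of leading coefficients triangular and invertible; a linear change over $\CC$ yields operators $P_i$ (polynomial in $z$ and the $z\,y_j\partial/\partial y_j$) with $z^{-1}P_i(I^{\pm})=\phi_i+O(\bq)$ for a basis $\{\phi_i\}$ of $H^*_{CR}(X_{\pm})$, which is the definition of big.

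The principal obstacle is the first step. The toric mirror theorem is genuinely deep: one must match an infinite family of fixed-point contributions with the closed-form hypergeometric series, and the orbifold features --- gerbe structure obstructing the evaluation maps, representability of orbifold stable maps, the bookkeeping of the fractional-part conditions $\langle a\rangle=\langle D_j\cdot k\rangle$, and the passage through a common toric resolution in the noncompact intermediate steps --- all demand care. By contrast, once \cite{CCIT2} is in hand the bigness statement is only a careful accounting of leading terms and poses no conceptual difficulty.
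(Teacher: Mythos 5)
Your proposal follows essentially the same route as the paper: invoke the toric mirror theorem of \cite{CCIT2} to establish that $I^{\pm}$ is an $I$-function, and then verify bigness by a direct triangularity argument with the operators $z\,y_j\partial/\partial y_j$ and the box representatives of the twisted sectors. The paper simply cites \cite{CCIT2} together with Lemma~5.23 of \cite{CIJ}, which carries out exactly the bigness check you sketch.
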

\begin{proof}
This follows from the mirror theorem for toric stacks as given in \cite{CCIT2} and is explicitly proven in Lemma~5.23 of \cite{CIJ}.
\end{proof}


\subsection{Differential equations}\label{s:diffeq}
In this section we investigate the differential equations satisfied by $I_{\pm}$.  What follows is just a small part of the larger theory of GKZ systems, which can be referenced in \cite{GKZ} or \cite{Ad}.

Since $p_1^{\pm}, \ldots, p_r^{\pm}$ forms a basis for $\LL^\vee \otimes \QQ$, each $D_j$ can be written as a linear combination $D_j = \sum_{i=1}^r a^i_j p_i^+ = \sum_{i=1}^r \tilde a^i_j p_i^-$.  Define the operator
\[\partial_j := \sum_{i=1}^r a^i_j y_i \frac{\partial}{\partial y_i}= \sum_{i=1}^r \tilde  a^i_j \tilde  y_i \frac{\partial}{\partial \tilde y_i}.\]
This operator is designed so that
\[\partial_j y^d =  (D_j \cdot d) y^d.\]
%
Furthermore, we observe that 
\begin{align*} \partial_j e^{\sigma_+/z} &= \partial_j\left(\prod_{i=1}^r y_i^{\theta_+(p_i^+)/z} \right) \\
&= \theta_+(\sum_{i=1}^r a^i_j p_i^+)/z\left(\prod_{i=1}^r y_i^{\theta_+(p_i^+)/z} \right) \\
&= (\theta_+(D_j)/z)e^{\sigma_+/z} \\
&= \frac{u_j}{z} e^{\sigma_+/z}.
\end{align*}
The same is true for the minus side: $\partial_j e^{\sigma_-/z} = \frac{u_j}{z} e^{\sigma_-/z}$.

%
Consider the GKZ system of differential operators $\{\triangle_d\}_{d \in \LL}$
\begin{equation}\label{e:GKZ}\triangle_d := \prod_{j | d_j > 0} \prod_{l=0}^{d_j - 1} \left( z \partial_j   - lz \right) -   y^d \prod_{j | d_j < 0} \prod_{l=0}^{-d_j - 1} \left( z \partial_j   - lz \right) 
\end{equation}
where $d_j := D_j \cdot d$.  One can easily check the following, using the above facts about $\partial_j$.
\begin{proposition}
The relation $\triangle_d I^+(y, z) = 0$ holds for all $d \in \LL$.  
$I^-$ satisfies the same set of equations after replacing $ y^d$ with $\tilde y^d$.
\end{proposition}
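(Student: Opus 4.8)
The plan is to verify the identity $\triangle_d I^+(y,z) = 0$ by a direct computation, exploiting the recursive structure of the hypergeometric factors appearing in $I^+_k$. First I would write $I^+(y,z) = z e^{\sigma_+/z} \sum_{k \in \KK_+} y^k I^+_k \ii_{[-k]}$ with
\[ I^+_k = \prod_{j=1}^m \frac{\prod_{a \colon \langle a\rangle = \langle D_j \cdot k\rangle,\, a \leq 0}(u_j + az)}{\prod_{a \colon \langle a\rangle = \langle D_j \cdot k\rangle,\, a \leq D_j \cdot k}(u_j + az)}, \]
and record the two elementary facts established just above: $\partial_j y^k = (D_j \cdot k) y^k$ and $\partial_j e^{\sigma_+/z} = (u_j/z) e^{\sigma_+/z}$. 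Consequently, $z\partial_j$ acts on the $k$-th term $e^{\sigma_+/z} y^k I^+_k$ by multiplication by $u_j + (D_j \cdot k) z$ (the operator $I^+_k$ itself is a constant in $y$, so it commutes through). Iterating, the product $\prod_{l=0}^{d_j-1}(z\partial_j - lz)$ acts on the $k$-th term by the scalar $\prod_{l=0}^{d_j-1}(u_j + (D_j\cdot k - l)z)$, and similarly for the negative-degree factors.

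Next I would compare the two terms of $\triangle_d$ acting on $I^+$. Applying $\prod_{j\colon d_j > 0}\prod_{l=0}^{d_j-1}(z\partial_j - lz)$ to the $k$-th summand produces $e^{\sigma_+/z} y^k$ times $I^+_k$ times $\prod_{j\colon d_j>0}\prod_{l=0}^{d_j-1}(u_j + (D_j\cdot k - l)z)$. Applying $y^d \prod_{j\colon d_j<0}\prod_{l=0}^{-d_j-1}(z\partial_j - lz)$ to the $(k-d)$-th summand (note $k - d \in \KK_+$ exactly when $k \in \KK_+$, since $D_j\cdot d \in \ZZ$ so $I_{k-d} = I_k$) produces $e^{\sigma_+/z} y^k$ times $I^+_{k-d}$ times $\prod_{j\colon d_j<0}\prod_{l=0}^{-d_j-1}(u_j + (D_j\cdot(k-d) - l)z)$. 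So it suffices to show, for every $j$, that the ratio $I^+_k / I^+_{k-d}$ equals the ratio of the corresponding products of linear factors in $z$; reindexing the shifted products, this reduces to the single-$j$ identity
\[ \frac{\prod_{a\colon \langle a\rangle = \langle D_j\cdot k\rangle,\, a\le 0}(u_j+az)}{\prod_{a\colon \langle a\rangle = \langle D_j\cdot k\rangle,\, a\le D_j\cdot k}(u_j+az)} \cdot \frac{\prod_{a\colon \langle a\rangle = \langle D_j\cdot k\rangle,\, a\le D_j\cdot(k-d)}(u_j+az)}{\prod_{a\colon \langle a\rangle = \langle D_j\cdot k\rangle,\, a\le 0}(u_j+az)} \]
matches $\prod_{l}(u_j + (D_j\cdot k - l)z)$ over $\prod_{l}(u_j + (D_j\cdot(k-d) - l)z)$, according to the sign of $d_j = D_j\cdot d$. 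Both sides, after cancellation, equal the telescoping product of factors $(u_j + az)$ with $a$ ranging over the half-open integer interval between $D_j\cdot(k-d)$ and $D_j\cdot k$ (with the appropriate orientation), using that $\langle D_j\cdot k\rangle = \langle D_j\cdot(k-d)\rangle$ because $d_j\in\ZZ$.

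The only subtlety, and the step I expect to require the most care, is the bookkeeping of the products of linear factors when $d_j$ can have either sign and when the congruence-class constraint $\langle a\rangle = \langle D_j\cdot k\rangle$ interacts with the inequalities $a\le 0$, $a\le D_j\cdot k$, $a\le D_j\cdot(k-d)$: one must check that no spurious factors survive and that the two cases $d_j>0$ and $d_j<0$ both produce precisely the scalar coming from the $\partial_j$-action computed above. Once this matching is confirmed termwise in $k$, summing over $k\in\KK_+$ gives $\triangle_d I^+(y,z) = 0$. The computation for $I^-$ is verbatim the same, with $y^d$ replaced by $\tilde y^d$, $\sigma_+$ by $\sigma_-$, and $\KK_+$ by $\KK_-$, since the operator $\partial_j = \sum_i \tilde a_j^i \tilde y_i \partial/\partial\tilde y_i$ has the identical action on the minus-side data. $\square$
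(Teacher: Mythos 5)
Your proof is correct, and it is precisely the term-by-term verification that the paper leaves to the reader with the remark ``one can easily check the following, using the above facts about $\partial_j$.'' The only cosmetic wrinkle is the sentence ``since $D_j\cdot d\in\ZZ$ so $I_{k-d}=I_k$,'' where $I_{k-d}=I_k$ should be read as an equality of the index sets $I_f=\{i: D_i\cdot f\in\ZZ\}$ (so $k\in\KK_+\Leftrightarrow k-d\in\KK_+$), not of the hypergeometric coefficients $I^+_k$ themselves.
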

%
%
\subsection{Fractional calculus}\label{s:fd}
It will be important in what follows to make use of so-called fractional derivatives \cite{MSH}.  In particular we would like to have a notion of $\left( d / dx \right)^a$ for $a \in \QQ$.  We note that for positive integers $k$ and $a$,
\[ \left(\frac{d}{dx}\right)^a x^k = \frac{k!}{(k-a)!}x^{k-a}.\]  We can generalize this to $a \in \CC$ as
\[ \left(\frac{d}{dx}\right)^a x^k = \frac{\Gamma(k+1)}{\Gamma(k-a+1)} x^{k-a}.\]
To generalize this to other functions of $x$, 
we define, for $0 < a < 1$,
\begin{equation}\label{e:fdiff} \left(\frac{d}{dx}\right)^a f(x) := \frac{1}{\Gamma(1-\alpha)} \frac{d}{dx}\int_0^x \frac{f(t)}{(x-t)^\alpha} dt.\end{equation}
It is not hard to check that this agrees with the previous expression given by Gamma functions when $f(x) = x^k$.
Then for arbitrary $a$, 
\[
\left(\frac{d}{dx}\right)^a f(x) := \left(\frac{d}{dx}\right)^{\langle a\rangle}\circ \left( \left(\frac{d}{dx} \right)^{\lfloor a\rfloor}f(x)\right).\]

\begin{remark}
One must take care with the previous definition.  Unlike the fractional \emph{integral} (see, e.g., \cite{MSH}),  fractional derivatives unfortunately do not satisfy the semigroup property. That is, $\left(\frac{d}{dx}\right)^a\left(\frac{d}{dx}\right)^bf(x)$ does not necessarily equal $\left(\frac{d}{dx}\right)^b\left(\frac{d}{dx}\right)^a f(x)$.  

\end{remark}

\section{Regularization and Watson's lemma}
In this section we construct a function $\iI = \iI(y_1, \ldots, y_r, z)$ which is analytic in $y_r$ and has an asymptotic expansion equal to $I^-$.


\subsection{Regularization}

We begin by noting the following fact:
\begin{proposition}\label{p:an+}
$I^+$ is analytic with respect to $y_r$ with radius of convergence $\infty$.
\end{proposition}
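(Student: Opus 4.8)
The plan is to analyze the $y_r$-dependence of each summand $y^k I^+_k$ appearing in the series \eqref{e:parts} and show that, as a function of $y_r$, the full series has infinite radius of convergence. First I would fix the other variables $y_1, \ldots, y_{r-1}$ and treat $I^+(y,z)$ as a power series in $y_r$ with coefficients in $H^*_{CR}(X_+)[z,z^{-1}][\log y_1, \ldots, \log y_r]$. For $k \in \KK_+$, the exponent of $y_r$ in the monomial $y^k = \prod_i y_i^{p_i^+ \cdot k}$ is $p_r^+ \cdot k$. Since $p_r^+$ lies in $\overline{C_+}$, it is a nonnegative stability parameter, and the key input is that for $k$ ranging over $\KK_+$ (equivalently, over lattice points in the cone dual to $C_+$, up to the finitely many box contributions from $\KK_+/\widetilde\LL_+$), the values $p_r^+ \cdot k$ are bounded below and the number of $k$ with $p_r^+ \cdot k \leq N$ grows only polynomially in $N$. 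This is because $p_r^+$ restricted to the relevant cone is a positive linear functional (recall $p_r^+ \cdot e > 0$, and $e$ generates the ray of $C_+$ not shared with the wall), so its level sets meet $\KK_+$ in sets of bounded size and the sublevel sets grow polynomially.

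Next I would estimate the size of the coefficient $I^+_k$ itself. Each $I^+_k$ is a product over $j = 1, \ldots, m$ of the rational expressions
\[
\prod_{j=1}^m \frac{\prod_{a: \langle a \rangle = \langle D_j \cdot k \rangle,\, a \leq 0} (u_j + az)}{\prod_{a: \langle a \rangle = \langle D_j \cdot k \rangle,\, a \leq D_j \cdot k} (u_j + az)},
\]
and since $u_j$ is nilpotent in $H^*(X_+)$, this is (up to the nilpotent part, which contributes only finitely many terms) controlled by a ratio of Pochhammer-type products in the integers $\lfloor D_j \cdot k \rfloor$. The standard estimate for such hypergeometric coefficients shows $\|I^+_k\|$ decays factorially: roughly $\prod_{j: D_j \cdot k < 0}(\lfloor -D_j\cdot k\rfloor)!^{-1} \cdot \prod_{j: D_j \cdot k > 0}(\text{bounded})$ times polynomial factors, because the denominators $\prod_{l=0}^{D_j\cdot k - 1}$ produce genuine factorial growth when $D_j \cdot k$ is large and positive, while the numerators stay bounded. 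The upshot is that $\|I^+_k\|$ decays at least geometrically — indeed factorially — in $\sum_j |D_j \cdot k|$, hence in $\|k\|$.

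Combining these two estimates, the series $\sum_{k \in \KK_+} y^k I^+_k$ viewed in $y_r$ has, for each power $y_r^N$, a coefficient which is a finite sum (since only finitely many $k$ give $p_r^+ \cdot k = N$, after accounting for the $\log$-terms coming from $\widetilde\LL_+$) times rapidly decaying factors, and summing over all $N$ the factorial decay of $\|I^+_k\|$ dominates any fixed value of $|y_r|$; hence the radius of convergence in $y_r$ is $\infty$. I expect the main obstacle to be bookkeeping: correctly separating the nilpotent (finitely many terms, harmless) contributions of the $u_j$ from the numerical Pochhammer factors, and verifying that the factorial gain in the denominator of $I^+_k$ genuinely beats the polynomial growth in the number of lattice points $k$ at each level $p_r^+ \cdot k = N$ — in other words, making the heuristic "$I^+$ is of hypergeometric type, and hypergeometric series in a variable whose exponent comes from a positive functional on the Mori cone converge everywhere" into a clean estimate uniform in the auxiliary variables $y_1, \ldots, y_{r-1}$ and $z$.
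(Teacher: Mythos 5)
Your proposal has the right intuition — compare the factorial decay of the hypergeometric coefficient $I^+_k$ against the growth of the number of lattice points — but two of the key claims are not correct as stated, and the paper's proof is structured precisely to avoid them.

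\textbf{Gap 1: the coefficients $I^+_k$ do not decay factorially in $\|k\|$ uniformly in direction.} You assert that the numerators in
\[
\prod_{j=1}^m \frac{\prod_{a\le 0,\,\langle a\rangle=\langle D_j\cdot k\rangle}(u_j+az)}{\prod_{a\le D_j\cdot k,\,\langle a\rangle=\langle D_j\cdot k\rangle}(u_j+az)}
\]
``stay bounded.'' This is false: when $D_j\cdot k<0$, the numerator has more factors than the denominator, and the $j$th factor grows like $(\lfloor -D_j\cdot k\rfloor)!$. So for generic $k$ the net factorial power is $\prod_{j:D_j\cdot k<0}(\lfloor -D_j\cdot k\rfloor)!\,\big/\,\prod_{j:D_j\cdot k>0}(\lfloor D_j\cdot k\rfloor)!$, which may decay, grow, or be balanced depending on the direction of $k$. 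Factorial \emph{decay} occurs exactly when $k$ moves along a direction $e'$ with $\sum_j D_j\cdot e' > 0$, which is the non-$K$-equivalence / discrepancy condition. Your proposal never invokes this, yet it is indispensable: for a crepant direction ($\sum_j D_j\cdot e'=0$) there is no decay at all, and $I^+$ would have a finite radius of convergence. The paper's proof exploits this in the cleanest possible way: after fixing the powers of $y_1,\dots,y_{r-1}$, the remaining summation is one-dimensional, indexed by $l$ with $k=d+le$, and the ratio of consecutive terms tends to $0$ as $l\to\infty$ precisely because $\sum_j e_j>0$. The ratio test then gives infinite radius of convergence directly.

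\textbf{Gap 2: the sublevel sets $\{k\in\KK_+: p_r^+\cdot k\le N\}$ need not be finite.} Since $p_r^+$ lies only in the \emph{closure} $\overline{C_+}$, it can vanish on a positive-dimensional face of the dual cone $C_+^\vee$, so these sets can be infinite and the ``polynomial growth of lattice points'' count you invoke does not apply. (Also, $e$ is a primitive generator of $W^\perp$, not a generator of a ray of $C_+$.) What makes the problem one-dimensional — and the lattice-point count trivially finite — is that $I^+$ is being treated as a formal power series in $y_1,\dots,y_{r-1}$: fixing all the exponents $p_i^+\cdot k$ for $1\le i\le r-1$ cuts down to a single ray in the $e$-direction. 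This is exactly the decomposition the paper performs with $F^+_d(y_r,z)=\sum_{l}y^{le}I^+_{d+le}$; without it, the multi-dimensional estimate you sketch does not close.

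In short: same goal, but your global estimate is not valid, and the one-dimensional reduction to $F^+_d$ together with the ratio test (using $\sum_j e_j>0$) is not an optional refinement — it is the crux of the proof.
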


\begin{proof}
Fix $d \in \KK_+$.  We collect together all terms of $I^+(y, z)$ such that the exponent of $y_i$ is $p_i^+ \cdot d$ for $1 \leq i \leq r - 1$ and which are supported on $X_+^{[-d]}$.
The corresponding  function of $y_r$ is (up to a factor of $z e^{\sigma_+/z} y^d$)
\begin{align*}F^+_d(y_r, z) &:=  \sum_{l \in \ZZ} y^{le} I^+_{d+le}.
\end{align*} Note that $y^{le} = y_r^{l(p_r^+ \cdot e)}$ as $p_1^+, \ldots, p_{r-1}^+$ lie in $e^\perp$.  
For $l$ sufficiently small, $I^+_{d + le} = 0$, as $I^+(y, z)$ contains only positive powers of $y_1, \ldots, y_r$ by construction.

The ratio of the $l$th and $l+1$st terms in $F_d(y_r z^{\sum e_j/p_r^+ \cdot e}, z)$ is
\[y_r^{p_r^+ \cdot e} \frac{\prod_{j| e_j < 0} \prod_{m = 0}^{-e_j - 1} \left( u_j/z + k_j + l e_j - m \right) }{\prod_{j| e_j > 0} \prod_{m = 1}^{e_j } \left( u_j/z + k_j + l e_j + m\right)}.
\]
Recall that $\sum e_j > 0$, thus the above expression (expanded in terms of $u_j/z$) goes to zero as $l \mapsto \infty$.  Viewing $z$ as a complex parameter, we conclude by the ratio test that $F_d(y_r z^{\sum e_j/p_r^+ \cdot e}, z)$ and therefore also $F_d(y_r, z)$ are convergent series in $y_r$ with infinite radius of convergence.

\end{proof}

We note that while $I^+$ is analytic in the $y_r$ variable, $I^-$ is not analytic in the $\tilde y_r$ variable.  To compare the two, we first \emph{regularize} the function $I^-$ by adjusting the coefficient of each $\tilde y^k$.  

Define the \emph{regularized} $I^-$ function to be:
\begin{align}\label{e:Ireg} I^-_{reg}(\tilde y ,z) &:= ze^{\sigma_-/z} \sum_{k \in \KK_-}  \frac{\tilde y^k I^-_k}{\Gamma\left(1 - (\sum e_j/p_r^- \cdot e) (p_r^- \cdot k + \theta_-(p_r^-)/z) \right)} 
\end{align}
Note that $I^-_k = 0$ for all $k$ not in $C_-^\vee$.  Because $p_r^- \cdot k \geq 0$ for all $k \in  C_-^\vee$, the above modification is well defined.
%
%
It will be convenient to make the change of variables $x = \tilde y_r^{-(p_r^- \cdot e/\sum e_j)}$ (recall that we've chosen $e$ such that $p_r^- \cdot e < 0$).  View $I^-_{reg} = I^-_{reg}(\tilde y_1, \ldots, \tilde y_{r-1}, x, z)$ as a function of the variables $\tilde y_1, \ldots, \tilde y_{r-1}$ and  $x$.  
In exactly the same manner as the previous proposition, $I^-_{reg}$ can be shown to converge for $x$ within a sufficiently small radius.

For future use we record the following, which holds for all $a > 0$ (see Section~\ref{s:fd}):
\begin{align} \left(\frac{\partial}{\partial x}\right)^a e^{\sigma_-/z} \tilde y^d &= \left(\frac{\partial}{\partial x} \right)^a \left( \left(\prod_{i=1}^{r-1} \tilde y_i^{\theta_-(p_i^-)/z + p_i^- \cdot d}\right) x^{ -(\sum e_j/p_r^- \cdot e)(\theta_-(p_r^-)/z +p_r^- \cdot d)}\right)\nonumber \\ \label{e:dxa}
& = 
\frac{\Gamma\left(1 - (\sum e_j/p_r^- \cdot e) (p_r^- \cdot d + \theta_-(p_r^-)/z) \right)}{\Gamma\left(1 - (\sum e_j/p_r^- \cdot e) (p_r^- \cdot d + \theta_-(p_r^-)/z) - a\right)} \cdot  \\ \nonumber
&e^{\sigma_-/z} \left(\prod_{i=1}^{r-1} \tilde y_i^{p_i^- \cdot d}\right) x^{-(\sum e_j/p_r^- \cdot e)(p_r^- \cdot d) - a}.
\end{align}

\begin{proposition} \label{p:regdif}
For all $d$ in $ \LL$,
$I^-_{reg}$ satisfies the equation $\Delta^{reg}_d I^-_{reg}$ = 0, where $\Delta^{reg}_d$ is defined to be 
\[\prod_{j | d_j > 0} \prod_{l=0}^{d_j - 1} \left( z \partial_j   - lz \right) -  \left(\prod_{i=1}^{r-1} \tilde y_i^{p_i^- \cdot d} \right) \left( \frac{\partial}{\partial x}\right)^{(\sum e_j/p_r^- \cdot e)(p_r^- \cdot d)}  \prod_{j | d_j < 0} \prod_{l=0}^{-d_j - 1} \left( z \partial_j   - lz \right)\]
if $p_r^-\cdot d < 0$, and 
\[\left( \frac{\partial}{\partial x}\right)^{-(\sum e_j/p_r^- \cdot e)(p_r^- \cdot d)} \prod_{j | d_j > 0} \prod_{l=0}^{d_j - 1} \left( z \partial_j   - lz \right) -  \left(\prod_{i=1}^{r-1} \tilde y_i^{p_i^- \cdot d} \right)   \prod_{j | d_j < 0} \prod_{l=0}^{-d_j - 1} \left( z \partial_j   - lz \right)\]
if $p_r^-\cdot d \geq 0$.
\end{proposition}

\begin{proof}
The proof follows by a direct computation using the relations at the beginning of Section \ref{s:diffeq} along with \eqref{e:dxa}.  Although we are viewing $I^-_{reg}$ as an analytic function in $x$, by the same argument as in the previous proposition it converges absolutely for $x$ in a sufficiently small radius.  Thus it suffices to check term-by-term using the power series expansion of \eqref{e:Ireg}, which we may re-write as $I^-_{reg}(\tilde y_1, \ldots \tilde y_{r-1}, x ,z) = \sum_{k \in \KK_-} I^-_{reg, k}$ where 
\[I^-_{reg, k} = ze^{\sigma_-/z} \left(
\prod_{j=1}^m \frac{\prod_{a: \langle a \rangle = \langle k_j \rangle, a \leq 0} (u_j + az)}
{\prod_{a: \langle a \rangle = \langle  k_j \rangle, a \leq k_j} (u_j + az)}
\right)\frac{
\prod_{i=1}^{r-1} \tilde y_i^{ p_i^- \cdot k} x^{ -(\sum e_j/p_r^- \cdot e)(p_r^- \cdot k)} }{\Gamma\left(1 + (\sum e_j/p_r^- \cdot e) (p_r^- \cdot k + \theta_-(p_r^-)) \right)} \ii_{[-k]}.\]

Recall that 
$I^-_{reg, k}$ is $0$ unless $k \in C_-^\vee$.  Fix such a $k \in \KK_-$.  Assume that $p_r^- \cdot d < 0$.  
Applying $\prod_{j | d_j > 0} \prod_{l=0}^{d_j - 1} \left( z \partial_j   - lz \right)$ to $I^-_{reg, k}$ amounts to multiplying by 
\[\prod_{j | d_j \geq 0} \prod_{l=0}^{d_j - 1} \left( u_j  + k_jz - lz \right),\]
which yields
\begin{align}\label{e:LHSdif}
ze^{\sigma_-/z} &\left(
\prod_{j| d_j>0} \frac{\prod_{a: \langle a \rangle = \langle k_j \rangle, a \leq 0} (u_j + az)}
{\prod_{a: \langle a \rangle = \langle  k_j \rangle, a \leq k_j-d_j} (u_j + az)}
\right)
 \left(
\prod_{j| d_j<0} \frac{\prod_{a: \langle a \rangle = \langle k_j \rangle, a \leq 0} (u_j + az)}
{\prod_{a: \langle a \rangle = \langle  k_j \rangle, a \leq k_j} (u_j + az)}
\right) \cdot
\\ \nonumber
&\frac{\prod_{i=1}^{r-1} \tilde y_i^{ p_i^- \cdot k} x^{ -(\sum e_j/p_r^- \cdot e)(p_r^- \cdot k)} }{\Gamma\left(1 + (\sum e_j/p_r^- \cdot e) (p_r^- \cdot k + \theta_-(p_r^-)) \right)} \ii_{[-k]}.
\end{align}
Note that the above expression is $0$ if $k-d \notin C_-^\vee$, since it is supported on the subspace of $X_+^{[-k]}$ given by $z_j = 0$ for all $j$ such that $k_j - d_j \in \ZZ_{<0}$. 
On the other hand, if $k-d \in C_-^\vee$, by applying \[ \left(\prod_{i=1}^{r-1} \tilde y_i^{p_i^- \cdot d} \right) \left( \frac{\partial}{\partial x}\right)^{(\sum e_j/p_r^- \cdot e)(p_r^- \cdot d)}  \prod_{j | d_j < 0} \prod_{l=0}^{-d_j - 1} \left( z \partial_j   - lz \right)\] to $I^-_{reg, k-d}$ we obtain
\begin{align} \label{e:RHSdif}
&\left(
\prod_{j| d_j>0} \frac{\prod_{a: \langle a \rangle = \langle k_j \rangle, a \leq 0} (u_j + az)}
{\prod_{a: \langle a \rangle = \langle  k_j \rangle, a \leq k_j-d_j} (u_j + az)}
\right)
 \left(
\prod_{j| d_j<0} \frac{\prod_{a: \langle a \rangle = \langle k_j \rangle, a \leq 0} (u_j + az)}
{\prod_{a: \langle a \rangle = \langle  k_j \rangle, a \leq k_j} (u_j + az)}
\right) \cdot
\\ \nonumber
&\left(\prod_{i=1}^{r-1} \tilde y_i^{p_i^- \cdot d} \right) \left( \frac{\partial}{\partial x}\right)^{(\sum e_j/p_r^- \cdot e)(p_r^- \cdot d)}
ze^{\sigma_-/z} 
\frac{\prod_{i=1}^{r-1} \tilde y_i^{ p_i^- \cdot (k-d)} x^{ -(\sum e_j/p_r^- \cdot e)(p_r^- \cdot (k -d))} }{\Gamma\left(1 + (\sum e_j/p_r^- \cdot e)(p_r^- \cdot (k-d) + \theta_-(p_r^-)) \right)} \ii_{[-k]}.
\end{align}
By \eqref{e:dxa} this is seen to equal \eqref{e:LHSdif}.  The calculation for $p_r^- \cdot d > 0$ is similar.

\end{proof}

\begin{proposition}\label{p:Ireg} The regularized $I$-function $I^-_{reg}(\tilde y_1, \ldots \tilde y_{r-1}, x ,z)$ is analytic with respect to $x$, and can be analytically continued to all but a finite number of points in the region $\text{arg}(x) < \pi$.  Furthermore, after fixing powers of $y_1, \ldots, y_{r-1}$, the corresponding coefficient of $I^-_{reg}(\tilde y_1, \ldots \tilde y_{r-1}, x ,z)$ is bounded by an algebraic function as $x \mapsto \infty$.
\end{proposition}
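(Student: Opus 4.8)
The plan is to analyze $I^-_{reg}$ one $(\tilde y_1,\dots,\tilde y_{r-1})$-coefficient at a time, exactly as in the proof of Proposition~\ref{p:an+}. Fix $d \in \KK_-$ and collect all summands $I^-_{reg,k}$ with $k = d + le$, $l \in \ZZ$; since $p_1^-,\dots,p_{r-1}^-$ lie in $e^\perp$, these are precisely the terms contributing a fixed monomial in $\tilde y_1,\dots,\tilde y_{r-1}$, and the resulting function of $x$ is (up to an overall factor) a power series
\[
G^-_d(x,z) := \sum_{l \in \ZZ} x^{-(\sum e_j/p_r^-\cdot e)(p_r^-\cdot e)\,l}\,\frac{I^-_{d+le}}{\Gamma\!\left(1 + (\sum e_j/p_r^-\cdot e)(p_r^-\cdot(d+le) + \theta_-(p_r^-)/z)\right)}\,\ii_{[-d-le]},
\]
which terminates from below because $I^-_{reg}$ contains only nonnegative powers of $x$ (equivalently, $p_r^-\cdot k \geq 0$ on $C_-^\vee$). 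First I would compute the ratio of consecutive terms: as in Proposition~\ref{p:an+} the Pochhammer-type factors coming from the $\prod_j$ expression contribute a ratio that, because $\sum e_j > 0$, grows polynomially (not faster) in $l$; but now there is an extra factor from the Gamma function in the denominator, namely $\Gamma(1 + A(l+1))/\Gamma(1 + Al)$ with $A = (\sum e_j/p_r^-\cdot e)(p_r^-\cdot e) = -\sum e_j < 0$ — so this Gamma-ratio decays like a fixed negative power of $l$ (Stirling), which is exactly what makes $G^-_d$ convergent with a \emph{finite} but nonzero radius, recovering the claim already noted after \eqref{e:Ireg}.

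Next, for the analytic continuation and growth statement, I would use the fact (Proposition~\ref{p:regdif}) that $I^-_{reg}$ satisfies the regularized GKZ system $\Delta^{reg}_d$. Restricting to a fixed $(\tilde y_1,\dots,\tilde y_{r-1})$-coefficient, the operators $\Delta^{reg}_d$ induce an ordinary differential equation in $x$ (with the $\partial_j$ acting on the terminating family $d+le$ as scalars plus the degree-in-$x$ operator, and the fractional $x$-derivatives handled via \eqref{e:dxa}). After clearing the fractional powers — multiplying by a suitable power of $x$ and composing enough $\Delta^{reg}_d$'s so that all the exponents $(\sum e_j/p_r^-\cdot e)(p_r^-\cdot d)$ become integers — one obtains a genuine linear ODE in $x$ with polynomial (in $x$) coefficients annihilating $G^-_d$. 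Such an equation has only finitely many singular points (the zeros of the leading coefficient together with $\infty$), all of them regular or irregular singular of finite rank; classical ODE theory (Fuchs/Fabry, e.g.\ the formal solutions near $\infty$ being combinations of $x^\rho(\log x)^k$ times exponentials of polynomials in $x^{1/N}$) then gives analytic continuation along any path in $\CC$ avoiding those finitely many points. The restriction $\mathrm{arg}(x) < \pi$ comes from the branch cut of $x = \tilde y_r^{-(p_r^-\cdot e/\sum e_j)}$ and of the fractional derivatives \eqref{e:fdiff}, whose defining integral $\int_0^x$ requires a path from $0$ to $x$; one checks the singularities of the ODE are among those points and that none lies on the positive real axis obstructing continuation there.

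For the final growth assertion I would argue that near $x = \infty$ the leading coefficient of the cleared ODE does \emph{not} vanish — this is where $\sum e_j > 0$ (equivalently, the non-$K$-equivalence hypothesis) enters again: the same ratio-test computation that shows $G^-_d$ converges shows that $\infty$ is not an irregular singular point with exponential growth, i.e.\ the ODE has at worst a regular singular point at $\infty$, so every solution is bounded by a power of $x$ times a power of $\log x$, hence by an algebraic function as $x \to \infty$. The main obstacle I expect is the bookkeeping in converting the fractional-derivative GKZ relations $\Delta^{reg}_d$ into a single honest polynomial-coefficient ODE in $x$ and then pinning down the nature of the point at infinity precisely enough to conclude at-most-algebraic growth; in particular, because fractional derivatives do not commute (see the Remark in Section~\ref{s:fd}), one must be careful about the order in which the $\Delta^{reg}_d$ are composed and must verify that the resulting operator genuinely annihilates $G^-_d$ rather than some twist of it. Everything else is a direct transcription of the argument of Proposition~\ref{p:an+} together with standard asymptotic ODE theory.
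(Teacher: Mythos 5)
Your proposal follows the paper's proof essentially verbatim: fix the degree class modulo $e$, observe that the regularized GKZ operator restricts to an ordinary differential equation in $x$ with finitely many singular points and a regular singular point at infinity, and conclude algebraic boundedness from there. The one obstacle you flag --- clearing fractional exponents by composing several $\Delta^{reg}_d$'s, with the attendant non-commutativity worries --- does not arise in the paper's argument: it applies the single operator $\Delta^{reg}_e$ for the wall direction $e$ itself, whose fractional derivative $\left(\partial/\partial x\right)^{(\sum e_j/p_r^-\cdot e)(p_r^-\cdot e)} = \left(\partial/\partial x\right)^{\sum e_j}$ already has positive integer order because $\sum_j D_j\cdot e \in \ZZ_{>0}$ by the non-$K$-equivalence hypothesis, so one honest polynomial-coefficient ODE annihilating each $F^-_{reg,d}$ is obtained immediately.
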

\begin{proof}

Similarly to  Proposition \ref{p:an+}, we fix $d \in \KK_-$, and group together the terms of $I^-_{reg}(\tilde y_1, \ldots \tilde y_{r-1}, x ,z)$ corresponding to degree $d - le$ for $l \in \ZZ$:
\begin{align*}F^-_{reg, d}(x, z) &:= \sum_{l \in \ZZ}  
I^-_{reg, d-le}.
\end{align*} Again $I^-_{reg, d-le} = 0$ for sufficiently small $l$.  


Consider the differential operator $\Delta_e^{reg}$ from the above proposition:

\begin{align*} &\prod_{j | e_j > 0} \prod_{l=0}^{e_j - 1} \left(  \frac{-(p_r^- \cdot e) e_j}{\sum_m e_m}  x \frac{\partial}{\partial x} +d_j + u_j  - l \right) -  \\  &\left( \frac{\partial}{\partial x}\right)^{\sum e_j} \prod_{j | e_j < 0} \prod_{l=0}^{-e_j - 1} \left( \frac{-(p_r^- \cdot e) e_j}{\sum_m e_m}  x \frac{\partial}{\partial x} + d_j  + u_j - l \right)\end{align*}
we see that this annihilates $F^-_{reg, d}(x, z) $.
The corresponding differential equation has singularities at $0, \infty$, and all $x$ satisfying $\left( \tfrac{-p_r^- \cdot e}{\sum e_j}x\right)^{\sum e_j} = \prod_{\{j | e_j \neq 0\}} e_j^{-e_j}$.  The singularity at $x = \infty$ is regular, which implies that for each $d \in \KK_-$, $F^-_{reg, d}(x, z)$ is bounded by an algebraic function as $x \mapsto \infty$.  %
%
%
%

\end{proof}

\subsection{Laplace transform and Watson's lemma}

Define a new function $\iI(\tilde y_1, \ldots, \tilde y_{r-1}, u,z )$ using the Laplace transform:

\begin{align}\label{e:laptrans} \iI(\tilde y_1, \ldots, \tilde y_{r-1}, u,z ) : = & u\cL \left(I^-_{reg}(\tilde y_1, \ldots, \tilde y_{r-1}, x, z)\right) \\ \nonumber =& u\int_0^\infty e^{- u x} I^-_{reg}(\tilde y_1, \ldots, \tilde y_{r-1}, x, z) dx.\end{align}
Here again we are viewing $I^-_{reg}$ as an analytic function in $x$.  We choose our path of integration to be any ray from $0$ to $\infty$ with \[0<\text{arg}(x) < \text{min}(\pi/2, \pi/\sum e_j)\] to avoid the singular points found in Proposition~\ref{p:Ireg}.
By Proposition~\ref{p:Ireg}, after fixing powers of $\tilde y_1, \ldots, \tilde y_{r-1}$, the corresponding coefficient of $I^-_{reg}(\tilde y_1, \ldots \tilde y_{r-1}, x ,z)$ is in $O(x^N)$ for some $N>0$ as $x \mapsto \infty$.  
%
Thus the integral is well defined for all $u$ such that $\mathfrak{Re}(ux)>0$.  In fact the path integral does not depend on the choice of ray as long as $\text{arg}(x)$ is within the range given above.  The function $\iI(\tilde y_1, \ldots, \tilde y_{r-1}, u,z )$ is defined for all $u$ such that $|\text{arg}(u)| < \pi/2$.


\begin{proposition}\label{p:4.4} The asymptotic expansion of $\iI$ is given by $I_-$:
\[\iI(\tilde y_1, \ldots, \tilde y_{r-1}, u,z ) \sim I^-(\tilde y_1, \ldots, \tilde y_{r-1}, \tilde y_r, z)|_{\tilde y_r =   u^{(\sum e_j/p_r^- \cdot e)}}\] as $u \mapsto \infty$ along any ray satisfying $|\arg(u)|<\pi/2$.

\end{proposition}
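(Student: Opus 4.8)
The plan is to apply Watson's lemma term-by-term. Fix powers of $\tilde y_1, \ldots, \tilde y_{r-1}$, say corresponding to a class $d \in \KK_-$, and consider the associated coefficient function $F^-_{reg, d}(x, z)$ defined in the proof of Proposition~\ref{p:Ireg}. By construction this is a (convergent, for small $|x|$) power series in $x$ of the form $\sum_{l} c_l x^{\lambda + l \mu}$, where $\lambda = -(\sum e_j / p_r^- \cdot e)(p_r^- \cdot d)$ is the minimal exponent and $\mu = -(\sum e_j / p_r^- \cdot e)(p_r^- \cdot e) = \sum e_j > 0$ is the common spacing (recall $p_1^+, \ldots, p_{r-1}^+$ lie in $e^\perp$, so only the $y_r$-exponent changes along the family $d - le$). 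Crucially, the regularization in \eqref{e:Ireg} was chosen precisely so that each coefficient $c_l$ of $F^-_{reg,d}$ is the coefficient of the corresponding term of $F^-_d$ (the analogous grouping for $I^-$, cf. Proposition~\ref{p:an+}) divided by $\Gamma(1 + \lambda + l\mu)$, where I am now writing the Gamma-argument in terms of the shifted exponent.

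First I would record that $\iI$ is, by \eqref{e:laptrans}, the termwise Laplace transform of $I^-_{reg}$ multiplied by $u$, so after fixing the $\tilde y_1, \ldots, \tilde y_{r-1}$ powers the relevant coefficient of $\iI$ is $u \cL(F^-_{reg,d}(x,z))$. Next I would invoke Watson's lemma in the following standard form: if $F(x) \sim \sum_{l \geq 0} c_l x^{\lambda + l\mu}$ as $x \to 0^+$ (with $\mathfrak{Re}(\lambda) > -1$, $\mu > 0$) and $F$ has at most algebraic growth at infinity along the contour — which is exactly what Proposition~\ref{p:Ireg} provides, the singularity at $x = \infty$ being regular — then
\[
u \int_0^\infty e^{-ux} F(x)\, dx \;\sim\; \sum_{l \geq 0} c_l\, \Gamma(1 + \lambda + l\mu)\, u^{-\lambda - l\mu}
\]
as $u \to \infty$ with $|\arg u| < \pi/2$. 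Applying this, the Gamma factor from Watson's lemma exactly cancels the $1/\Gamma(1+\lambda+l\mu)$ introduced by the regularization, so the $l$th coefficient of the asymptotic expansion of $u\cL(F^-_{reg,d})$ is the $l$th coefficient of $F^-_d$, now carried on $u^{-\lambda - l\mu} = u^{(\sum e_j/p_r^-\cdot e)(p_r^-\cdot(d - le))}$. Reassembling over all $d \in \KK_-$ and comparing with the definition of $I^-$ in \eqref{e:parts}, this is precisely $I^-(\tilde y_1, \ldots, \tilde y_{r-1}, \tilde y_r, z)$ under the substitution $\tilde y_r = u^{(\sum e_j / p_r^- \cdot e)}$ (the factor $\tilde y^k = x^{\ldots} \mapsto u^{\ldots}$ matches because $\tilde y_r^{p_r^- \cdot k}$ is what the power of $u$ tracks).

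There are two points requiring care, and I expect the main obstacle to be making the cohomology-valued and $z$-dependent bookkeeping rigorous. First, the Gamma-argument $1 + (\sum e_j / p_r^- \cdot e)(p_r^- \cdot k + \theta_-(p_r^-)/z)$ contains the nilpotent operator $\theta_-(p_r^-)$ and the formal parameter $z^{-1}$; one must expand $\Gamma$ and $1/\Gamma$ as finite Taylor series in this nilpotent (using that $H^*_{CR}(X_-)$ is finite-dimensional, so $\theta_-(p_r^-)$ is nilpotent), check that $\Gamma$ never hits a pole on $C_-^\vee$ (guaranteed since $p_r^- \cdot k \geq 0$ there, as noted after \eqref{e:Ireg}), and verify that Watson's lemma continues to hold coefficient-by-coefficient in this nilpotent expansion — this is routine since each coefficient is a genuine scalar asymptotic statement. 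Second, one must justify that asymptotic expansion commutes with the (finite, after fixing $\tilde y_1,\dots,\tilde y_{r-1}$-degree, since $I^-$ is a formal power series there) sum over sectors and the passage through $ze^{\sigma_-/z}$; since $e^{\sigma_-/z}$ is independent of $x$ and $u$ it simply rides along. I would also remark that the choice of integration ray with $0 < \arg(x) < \min(\pi/2, \pi/\sum e_j)$ from the paragraph preceding the proposition ensures the contour avoids the finitely many finite singularities of $F^-_{reg,d}$ located on the circle $|(-p_r^-\cdot e/\sum e_j) x|^{\sum e_j} = \prod_{j: e_j \neq 0} e_j^{-e_j}$, so the Laplace integral is genuinely convergent and independent of the ray within that range, which is what lets us apply Watson's lemma along any such ray and then rotate $u$ throughout $|\arg u| < \pi/2$.
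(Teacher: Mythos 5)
Your argument is correct and follows essentially the same route as the paper's proof: apply Watson's lemma term-by-term and observe that the $\Gamma$-denominator introduced by the regularization in \eqref{e:Ireg} cancels exactly against the $\Gamma$-factor Watson's lemma produces, recovering $I^-$ under $\tilde y_r = u^{(\sum e_j / p_r^- \cdot e)}$. The only cosmetic difference is the treatment of the nilpotent $\theta_-(p_r^-)/z$ appearing in the exponent $\lambda$: you propose to Taylor-expand $\Gamma$ and $x^\lambda$ finitely in the nilpotent and apply the scalar lemma to each coefficient, whereas the paper first treats the $u_i$ as formal parameters so that Watson's lemma applies directly, then descends to $H^*(X_-^{[-k]})$ via the surjection from $\CC[u_1,\ldots,u_m]$; the two are equivalent.
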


\begin{proof}
First, view the classes $u_i$  ($1 \leq i \leq m$) as formal parameters.  In this case
the corresponding asymptotic expansion is a direct application of Watson's Lemma.
One version of Watson's Lemma states that given a function $\phi(x) = x^\lambda g(x)$ where: 
\begin{itemize}
\item $g(x)$ is infinitely differentiable at $x = 0$ and $g(0) \neq 0$; 
\item $\mathfrak{Re}(\lambda) > -1$;
\item there exists a $b>0$ such that $|\phi(x)| < e^{bx}$ for $|x|$ sufficiently large;
\end{itemize}
then the following asymptotic expansion holds:
\[u\int_0^\infty e^{-ux} \phi(x) dx \sim \sum_{n=0}^\infty \frac{g^{(n)}(0) \Gamma(1+\lambda + n)}{n! u^{\lambda + n}}\]
as $u \mapsto \infty$.
One proof of this lemma is given in \cite{Miller}, Proposition 2.1.  Although the conditions on $\phi(x)$ are slightly different in \cite{Miller} than those stated above 
in fact the same proof implies the result in this case as well (see \cite[Section 2.3]{Miller} or \cite{Tem} for the version given above).
%

To apply the lemma, group the terms of $I^-$ according to their powers of $x$ modulo $\sum e_j$.  Write $I^-$ as a sum of $N = {-p_r^- \cdot e}$ functions, 
$L_0, \ldots, L_{-p_r^- \cdot e - 1}$, where the function $L_l$ contains those terms of $I^-$ with $x$-powers of the form $x^{(\sum e_j/p_r^- \cdot e)(\theta_-(p_r^-)/z + l) + k\sum e_j}$ for  $k$ in $\ZZ$.  In other words, $L_l(x,z)$ can be written in the form $x^{(\sum e_j/p_r^- \cdot e)(\theta_-(p_r^-)/z + l)}g(x)$ where $g(x)$ is a power series in $x$ and $g(0) \neq 0$.  Let $L_{l, reg}$ denote the corresponding summand of $I^-_{reg}$ and let $\mathbb{L}_l := u \cL ( L_{l, reg}(x,z))$. Watson's Lemma implies that $\mathbb{L}_l \sim L_l$ for each $0 \leq l < -p_r^- \cdot e - 1$.  Summing over $l$ then gives the result when the cohomology classes $u_i$ are viewed as formal parameters.

To then prove the statement in $H^*_{CR}(X_-)$, we use the above, and note that for each $k \in \KK_-/\LL$, $H^*\big(X_-^{[-k]}\big)$ is a quotient of the algebra $\CC[ u_1, \ldots, u_n]$ (see e.g. \cite{BCS}).
\end{proof}


\begin{proposition}

$\iI(\tilde y_1, \ldots, \tilde y_{r-1}, u,z )$ satisfies the differential equation
\begin{equation}\label{e:lapdelta} \prod_{j | d_j > 0} \prod_{l=0}^{d_j - 1} \left( z \bar \partial_j   - lz \right)\iI -  \left(\prod_{i=1}^{r-1} \tilde y_i^{p_i^- \cdot d} \right) u^{(\sum e_j/p_r^- \cdot e)(p_r^- \cdot d)}  \prod_{j | d_j < 0} \prod_{l=0}^{-d_j - 1} \left( z \bar \partial_j   - lz \right) \iI = 0,
\end{equation}
where  
\[\bar \partial_j :=  \sum_{i=1}^{r-1} \tilde a^i_j \tilde  y_i \frac{\partial}{\partial \tilde y_i} + \tilde a_r^j\frac{p_r^- \cdot e} {\sum e_j} u\frac{\partial }{\partial u}.\]
\end{proposition}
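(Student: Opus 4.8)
The plan is to transport the regularized GKZ relation of Proposition~\ref{p:regdif} through the Laplace transform \eqref{e:laptrans}, using the standard interplay between multiplication by $x$, the operator $x\frac{\partial}{\partial x}$, fractional differentiation in $x$, and the transform variable $u$. Concretely, recall the elementary facts that $u\cL\big(x\tfrac{\partial}{\partial x} f\big) = -\big(u\tfrac{\partial}{\partial u}\big)\big(u\cL(f)\big)$ (up to the contribution at $x=0,\infty$, which vanishes here by the growth and regularity established in Proposition~\ref{p:Ireg}), and that $u\cL\big((\tfrac{\partial}{\partial x})^a f\big) = u^a\cdot u\cL(f)$ for the fractional derivative \eqref{e:fdiff}, again under the decay hypotheses supplied by Proposition~\ref{p:Ireg}. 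Since $I^-_{reg}$ converges absolutely for $x$ in a sufficiently small sector (as noted after \eqref{e:Ireg}) and is bounded by an algebraic function at infinity along the chosen ray, these manipulations are justified term-by-term and then assembled.

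First I would take the operator $\Delta^{reg}_d$ from Proposition~\ref{p:regdif} in the case $p_r^- \cdot d < 0$, apply it to $I^-_{reg}(\tilde y_1,\ldots,\tilde y_{r-1},x,z)$, and apply $u\cL(-)$ to the resulting identity $\Delta^{reg}_d I^-_{reg} = 0$. The factor $\prod_{j\mid d_j<0}\prod_{l=0}^{-d_j-1}(z\partial_j - lz)$ involves only $y_i\frac{\partial}{\partial y_i}$ for $i<r$ and $x\frac{\partial}{\partial x}$; the first commute with $\cL$ trivially, and the second becomes $-u\frac{\partial}{\partial u}$ as above. The crucial translation is $\big(\tfrac{\partial}{\partial x}\big)^{(\sum e_j/p_r^-\cdot e)(p_r^-\cdot d)} \mapsto u^{(\sum e_j/p_r^-\cdot e)(p_r^-\cdot d)}$, which produces exactly the monomial prefactor in \eqref{e:lapdelta}. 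Next I would check that the combination of $y_i\frac{\partial}{\partial y_i}$-terms and $u\frac{\partial}{\partial u}$-terms that arises is precisely what is packaged in the operator $\bar\partial_j$: writing $D_j = \sum_i \tilde a^i_j p_i^-$ and tracking how each $\partial_j = \sum_i \tilde a^i_j \tilde y_i\frac{\partial}{\partial\tilde y_i}$ transforms, the $i = r$ summand $\tilde a^r_j\,\tilde y_r\frac{\partial}{\partial\tilde y_r}$ becomes $\tilde a^r_j\frac{p_r^-\cdot e}{\sum e_j}u\frac{\partial}{\partial u}$ after the change of variables $x = \tilde y_r^{-(p_r^-\cdot e/\sum e_j)}$ and the passage to $u$, giving exactly $\bar\partial_j$. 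Finally I would dispose of the case $p_r^-\cdot d \geq 0$ by the same computation, this time converting the fractional $x$-derivative sitting on the positive-$d_j$ factor into the appropriate power of $u$ on the other side of \eqref{e:lapdelta} (equivalently, multiplying through by $u^{-(\sum e_j/p_r^-\cdot e)(p_r^-\cdot d)}$), exactly mirroring the two-case split already present in Proposition~\ref{p:regdif}.

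The main obstacle is bookkeeping rather than conceptual: one must verify that no boundary terms appear when integrating by parts to move $x\frac{\partial}{\partial x}$ and the fractional $\frac{\partial}{\partial x}$ across the Laplace integral, and that the fractional-derivative rule $u\cL\big((\tfrac{\partial}{\partial x})^a f\big) = u^a\, u\cL(f)$ holds for the relevant non-integer exponents and the chosen contour. The first is handled by Proposition~\ref{p:Ireg} (algebraic growth at $\infty$ kills the contribution there, and the $x^\lambda$-type behaviour near $0$ with $\mathfrak{Re}(\lambda) > -1$, as in the Watson's lemma setup of Proposition~\ref{p:4.4}, kills the contribution at $0$ once the integration-by-parts count matches $\lfloor a\rfloor$). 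The second reduces, via the definition \eqref{e:fdiff} and Fubini on the convergent sector, to the classical identity $\cL\big(\tfrac{d}{dx}\int_0^x (x-t)^{-\alpha}f(t)\,dt/\Gamma(1-\alpha)\big)(u) = u^{\alpha}\cL(f)(u)$; since the subtlety flagged in the remark on fractional derivatives (failure of the semigroup property) concerns composition of such operators and here each $\Delta^{reg}_d$ carries only a single fractional power, there is no ordering ambiguity to worry about.
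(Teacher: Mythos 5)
Your overall plan is the same as the paper's: apply $u\cL$ to the identity $\Delta^{reg}_d I^-_{reg}=0$ and use the translation rules for $x\tfrac{\partial}{\partial x}$ and the fractional $x$-derivative under the Laplace transform, tracking how each $\partial_j$ becomes $\bar\partial_j$. The transcription of the operators into $u$-derivatives, the split into the two cases by the sign of $p_r^-\cdot d$, and the observation that only one fractional power appears per $\Delta^{reg}_d$ are all correct and match the paper.

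The gap is in your justification that the fractional-derivative rule $u\cL((\tfrac{\partial}{\partial x})^a g)=u^a\, u\cL(g)$ applies. You appeal to the ``$x^\lambda$-type behaviour near $0$ with $\mathfrak{Re}(\lambda)>-1$'' from the Watson's-lemma setup. That condition is only what is needed for the Laplace integral of $I^-_{reg}$ itself to converge near $0$; it does \emph{not} guarantee that the $\lfloor a\rfloor$ boundary terms from integration by parts vanish, and indeed the paper explicitly warns that the Laplace transform of $(\tfrac{\partial}{\partial x})^a I^-_{reg}$ is in general \emph{not} defined. What actually saves the argument is a structural vanishing: since $I^-_{reg,k}=0$ unless $k\in C_-^\vee$, the terms of $\prod_{j\mid d_j<0}\prod_l(z\partial_j-lz)\,I^-_{reg}$ surviving the integer-order factor come only from $k$ with $p_r^-\cdot k\geq p_r^-\cdot d$, so this function is $e^{\sigma_-/z}\,x^{a}$ times a convergent power series in $x^{-1/(p_r^-\cdot e)}$ with $a=(\sum e_j/p_r^-\cdot e)(p_r^-\cdot d)$. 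It is this order-$a$ vanishing at $x=0$ (not mere integrability near $0$) that kills every boundary term $(\tfrac{\partial}{\partial x})^{a-n}(\cdot)\big|_{x=0}$ for $1\leq n\leq\lfloor a+1\rfloor$ and makes $u\cL$ of the resulting function well defined. You would also want to justify \eqref{e:lapdif1} at the same level: the paper does this via the power-series-in-$\lambda$ reading of $u\cL(x^{\lambda+a})=\Gamma(1+a+\lambda)u^{-\lambda-a}$ applied termwise to \eqref{e:Ireg}, rather than by a formal integration-by-parts count. So your route is right but the decisive vanishing needed to make the fractional Laplace rule legal is asserted rather than established.
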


\begin{proof}
The proof follows by applying $u \cc L$ to the equation $\Delta^{reg}_d I^-_{reg} = 0$ of Proposition~\ref{p:regdif} and using various well-known properties of the Laplace transform.  

We recall the following  facts about the Laplace transform.  First, if $f(x)$ and $g(x)$ are  functions with well defined Laplace transforms and $f(0) = 0$, then:
\begin{enumerate}
\item $\cc L(x  f(x)) = -\frac{d}{du} \cc L(f(x))$;
\item $\cc L (\frac{d}{dx} f(x)) = u \cc L( f(x))$;
\item $\cc L( (f*g)(x) ) = \cc L(f(x)) \cc L(g(x))$;
\end{enumerate}
where $(f*g)(x)$ denotes the convolution of $f$ and $g$.  Let $F(u) = u\cc L( f(x))$, then properties (1) and (2) together imply that 
\begin{equation}\label{e:lapdif1} u \cc L\left(x \frac{d}{dx} f(x)\right) = - u \frac{d}{du} F(u).\end{equation}
Furthermore, by (1), (2), and (3) above, together with definition~\eqref{e:fdiff}, one can check that for any $a  \in \QQ$,
\begin{equation}\label{e:lapdif2} u\cc L \left( \left(\frac{d}{dx}\right)^a f(x) \right) = u^a F(x),\end{equation}
provided that the left hand side is well defined and that $\left(\frac{d}{dx}\right)^{a-n} f(x)|_{x = 0} = 0$ for all integers $1 \leq n \leq \lfloor a + 1\rfloor$.

It is not \emph{a priori} obvious that \eqref{e:lapdif1} holds when applied to $I^-_{reg}$, and in fact the left hand side of \eqref{e:lapdif2} will not always be defined for $f(x)$ equal to $I^-_{reg}$.  Nonetheless we will show that equations~\eqref{e:lapdif1} and~\eqref{e:lapdif2} hold true in the cases of relevance to us.
%
%

We first check 
\eqref{e:lapdif1}. One can use the power series expansion~\eqref{e:Ireg}, together with the fact that 
\begin{equation}\label{e:lapform} u\cc L \left( x^{\lambda+a} \right) = \Gamma(1 + a + \lambda) u^{-\lambda-a},\end{equation}
where both the right and left hand side are meant to represent power series expansions in $\lambda$, for $\lambda$ a formal parameter and $a \geq 0$.  This implies that 
\begin{equation}\label{e:dlhs} u \cc L \prod_{j | d_j > 0} \prod_{l=0}^{d_j - 1} \left( z \partial_j   - lz \right) I^-_{reg} = 
\prod_{j | d_j > 0} \prod_{l=0}^{d_j - 1} \left( z \bar \partial_j   - lz \right) u \cc L \left(I^-_{reg}\right).\end{equation}

Checking \eqref{e:lapdif2} is slightly more subtle, since in general the Laplace transform of $\left(\frac{d}{dx}\right)^a I_{-, reg}(x) $ is not defined.  However consider, for instance, the operator 
 \[\left(\prod_{i=1}^{r-1} \tilde y_i^{p_i^- \cdot d} \right) \left( \frac{\partial}{\partial x}\right)^{(\sum e_j/p_r^- \cdot e)(p_r^- \cdot d)}  \prod_{j | d_j < 0} \prod_{l=0}^{-d_j - 1} \left( z \partial_j   - lz \right),\]
coming from the right hand side of $\Delta^{reg}_d$,  for some $d \in \LL$ such that 
$p_r^-\cdot d < 0$.  From equation~\eqref{e:RHSdif}, one sees that $\prod_{j | d_j < 0} \prod_{l=0}^{-d_j - 1} \left( z \partial_j   - lz \right) I^-_{reg, k -d}$ vanishes unless $k \in C_-^\vee$.  This implies that $\prod_{j | d_j < 0} \prod_{l=0}^{-d_j - 1} \left( z \partial_j   - lz \right) I^-_{reg, k} = 0$ for all $k$ such that $p_r^- \cdot d > p_r^- \cdot k$.  We deduce that the expression
 \[
 \prod_{j | d_j < 0} \prod_{l=0}^{-d_j - 1} \left( z \partial_j   - lz \right) I^-_{reg}(x, z)\]
 can be expanded as $e^{\sigma_-/z} x^{(\sum e_j/p_r^- \cdot e)(p_r^- \cdot d)} f(x,z)$, where $f(x,z)$
   is a power series in $x^{-1/p_r^- \cdot e}$ with positive radius of convergence.  Applying the operator  $\left( \frac{\partial}{\partial x}\right)^{(\sum e_j/p_r^- \cdot e)(p_r^- \cdot d)}$, the result takes the form $e^{\sigma_-/z} g(x,z)$ for $g(x,z)$ a power series in $x^{-1/p_r^- \cdot e}$.  This has a well defined Laplace transform.
Using equation \eqref{e:lapdif2}, one can then conclude that 
\begin{align} \label{e:drhs} &u \cc L\left( \left( \frac{\partial}{\partial x}\right)^{(\sum e_j/p_r^- \cdot e)(p_r^- \cdot d)}  \prod_{j | d_j < 0} \prod_{l=0}^{-d_j - 1} \left( z \partial_j   - lz \right) I^-_{reg}\right) \\ =&u^{(\sum e_j/p_r^- \cdot e)(p_r^- \cdot d)} \cdot u \cc L \left( \prod_{j | d_j < 0} \prod_{l=0}^{-d_j - 1} \left( z \partial_j   - lz \right) I^-_{reg} \right) \nonumber \\ \nonumber
= & u^{(\sum e_j/p_r^- \cdot e)(p_r^- \cdot d)}  \prod_{j | d_j < 0} \prod_{l=0}^{-d_j - 1} \left( z \bar \partial_j   - lz \right) u \cc L \left(I^-_{reg}\right).
\end{align}

Equations~\eqref{e:dlhs} and~\eqref{e:drhs} together imply~\eqref{e:lapdelta} in the case where $p_r^- \cdot d < 0$.  The other case is similar.

%
\end{proof}

Consider now the change of variables 
\begin{eqnarray}\label{e:cov2}
\tilde y_i &=& y_i y_r^{c_i} \hspace{1.5 cm} 1 \leq i \leq r-1 \\
 u &=& y_r^{-c p_r^- \cdot e/ \sum e_j}  \nonumber.
\end{eqnarray}
View the function $\iI( y_1, \ldots,  y_{r-1}, y_r,z )$ as a function of the variables $y_1, \ldots, y_r$.  Note that under this change of variables, 
\[\left(\prod_{i=1}^{r-1} \tilde y_i^{p_i^- \cdot d} \right) \left(u\right)^{(\sum e_j/p_r^- \cdot e)(p_r^- \cdot d)} \mapsto \left(\prod_{i=1}^{r-1}  y_i^{p_i^- \cdot d} y_r^{c_i p_i^- \cdot d} \right) y_r^{-c(p_r^- \cdot d)} = \prod_{i=1}^r y_i^{p_i^+ \cdot d} = y^d.\]  
Similarly, 
\[\bar \partial_j = \sum_{i=1}^{r-1} \tilde a^i_j \tilde  y_i \frac{\partial}{\partial \tilde y_i} +\tilde a_r^j\frac{p_r^- \cdot e} {\sum e_j} u\frac{\partial }{\partial u} \mapsto \sum_{i=1}^{r}  a^i_j   y_i \frac{\partial}{\partial  y_i} = \partial_j
\] 
Under this change of variables, the previous proposition states the following.
\begin{proposition}\label{p:finally}
For all $d \in \LL$, $\iI( y_1, \ldots, y_r,z )$ satisfies the equation  $\Delta_d \iI(y, z) = 0$, where $\Delta_d$ is defined as in \eqref{e:GKZ}.
\end{proposition}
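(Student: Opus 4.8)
The plan is to obtain this as an immediate consequence of the preceding proposition together with the change of variables \eqref{e:cov2}. That proposition establishes equation \eqref{e:lapdelta}, satisfied by $\iI(\tilde y_1, \dots, \tilde y_{r-1}, u, z)$ for every $d \in \LL$ (the proof there treats the case $p_r^- \cdot d < 0$ in detail and remarks that $p_r^- \cdot d \ge 0$ is analogous, so \eqref{e:lapdelta} holds uniformly for all $d \in \LL$). Since \eqref{e:cov2} is a biholomorphism onto its image on a suitable sector in $y_r$ (on which the branches of the fractional powers of $y_r$ are single-valued), its pullback is an algebra isomorphism on the ring of differential operators: it preserves composition of operators and commutes with the substitution applied to $\iI$. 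Hence it suffices to track how each of the three kinds of factor occurring in \eqref{e:lapdelta} transforms under \eqref{e:cov2}, which is exactly the content of the two identities recorded in the paragraph immediately preceding the statement.

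Concretely, I would first use that $\bar \partial_j$ pulls back to $\partial_j$ under \eqref{e:cov2}; this is a direct chain-rule computation in the logarithmic coordinates $\log y_i$, relying on $p_i^+ = p_i^-$ for $1 \le i \le r-1$ and on the definitions of $c$ and the $c_i$. It follows that each product $\prod_{l=0}^{d_j - 1}(z \bar \partial_j - l z)$ pulls back to $\prod_{l=0}^{d_j - 1}(z \partial_j - l z)$, and likewise for the factors indexed by $j$ with $d_j < 0$. Next, the multiplication operator $\bigl(\prod_{i=1}^{r-1} \tilde y_i^{\, p_i^- \cdot d}\bigr)\, u^{(\sum e_j / p_r^- \cdot e)(p_r^- \cdot d)}$ pulls back to multiplication by $\prod_{i=1}^{r} y_i^{\, p_i^+ \cdot d} = y^d$. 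Substituting these into \eqref{e:lapdelta} — and observing that in the second term of \eqref{e:lapdelta} the monomial prefactor acts \emph{after} the operators $\prod_{j | d_j < 0}\prod_{l}(z \bar \partial_j - lz)$, exactly matching the position of $y^d$ in \eqref{e:GKZ} so that no commutator corrections arise — one reads off that \eqref{e:lapdelta} becomes precisely $\Delta_d \, \iI(y, z) = 0$ with $\Delta_d$ as defined in \eqref{e:GKZ}, which is the assertion of the proposition.

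I do not anticipate a genuine obstacle at this stage: all of the analytic input has already been supplied in the preceding propositions — convergence of $I^-_{reg}$ (Proposition~\ref{p:Ireg}), the justification for interchanging $u \cc L$ with the ordinary and fractional differential operators, and the relevant Laplace identities — while the change-of-variables formulas $\bar \partial_j \mapsto \partial_j$ and (prefactor) $\mapsto y^d$ are routine. The only point worth a line of care is to confirm that $p_i^+ \cdot d \in \ZZ$ for $d \in \LL$, so that $y^d = \prod_{i=1}^{r} y_i^{\, p_i^+ \cdot d}$ really is the monomial appearing in the GKZ operator \eqref{e:GKZ}; this holds because $p_i^+ \in \widetilde{\LL}_+^\vee \subseteq \LL^\vee$.
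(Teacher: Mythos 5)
Your proposal is correct and is essentially the paper's own argument: the paper does not give a separate proof environment for Proposition~\ref{p:finally} but instead records, in the paragraph immediately preceding it, exactly the two change-of-variable identities you use (the prefactor $\bigl(\prod_{i=1}^{r-1}\tilde y_i^{\,p_i^-\cdot d}\bigr)u^{(\sum e_j/p_r^-\cdot e)(p_r^-\cdot d)}\mapsto y^d$ and $\bar\partial_j\mapsto\partial_j$ under \eqref{e:cov2}) and then states the proposition as an immediate restatement of equation \eqref{e:lapdelta}. Your extra remarks about single-valuedness on a sector and the integrality of $p_i^+\cdot d$ are harmless elaborations that do not change the substance.
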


\section{The correspondence}
From the previous section we know that $\iI(y, z)$ has asymptotic expansion equal to $I^-(\tilde y, z)$ as $y_r \mapsto \infty$ along a suitable ray.  
In this section we deduce the main theorem by proving the existence of a linear transformation $L:  H^*_{CR}(X_+)[z, z^{-1}] \to H^*_{CR}(X_-)[z, z^{-1}]$ which maps $I^+(y,z)$ to $\iI(y, z)$.  

\subsection{The weak Fano case}
For simplicity, we first consider the case where $X_+$ is \emph{extended weak Fano}.  
\begin{definition}
The Deligne--Mumford stack $X$ is said to be \emph{weak Fano} if $\rho = c_1(TX)$ is in the closure of the K\"ahler cone.  

Given a toric stack $X_\omega = [U_\omega / K]$ defined via the characters $D_1, \ldots, D_m
\in \LL^\vee$ and the stability condition $\omega \in \LL^\vee \otimes \RR$, we say $X_\omega$ is \emph{extended weak Fano} if 
\[
\hat \rho:= \sum_{i = 1}^m D_i \in \overline{C_\omega}.
\]
\end{definition}
The notion of extended weak Fano depends upon the choice of presentation of $X_\omega$ as a toric quotient, however it is only slightly stronger than weak Fano.

The reason to consider this case is a due to a simplification arising from the following result of Iritani.
\begin{proposition}
When $\hat \rho \in \overline{C_+}$, the components of $I^+(y, z)$ give a basis of solutions to the GKZ system of equations $\{\Delta_d f(y, z) = 0\}_{d \in \LL}$, where $\Delta_d$ is defined in \eqref{e:GKZ}.
\end{proposition}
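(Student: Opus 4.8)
The plan is to separate the claim into two parts handled by completely different means: (i) the $N:=\dim_{\CC}H^*_{CR}(X_+)$ scalar components of $I^+(y,z)$, taken with respect to any $\CC$-basis of $H^*_{CR}(X_+)$, are linearly independent solutions of the GKZ system $\{\Delta_d\}_{d\in\LL}$; and (ii) the space of solutions of the relevant type is $N$-dimensional. Given (i) and (ii), the components form a basis of solutions.

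\emph{Part (i).} Fix a basis $\{\phi_i\}_{i=1}^N$ of $H^*_{CR}(X_+)$ and write $I^+(y,z)=\sum_{i=1}^N I^+_{(i)}(y,z)\,\phi_i$. Every operator $\Delta_d$ of \eqref{e:GKZ} is assembled only from the vector fields $\partial_j$ and multiplication by the monomials $y^d$, hence acts $\CC[z,z^{-1}]$-linearly and coefficientwise on $H^*_{CR}(X_+)$-valued functions; so the identity $\Delta_d I^+(y,z)=0$ proved in Section~\ref{s:diffeq} forces $\Delta_d I^+_{(i)}(y,z)=0$ for each $i$, and all $N$ components are solutions. For their linear independence I would use that $I^+$ is a \emph{big} $I$-function (Definition~\ref{d:bigI}): there are differential operators $P_i$, polynomial in $z$ and in the $z\,\partial/\partial y_j$, with $z^{-1}P_i(I^+)=\phi_i+O(y)$. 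The $N\times N$ matrix $M$ with entry $M_{ij}:=z^{-1}P_i(I^+_{(j)})$, i.e.\ the $\phi_j$-component of $z^{-1}P_i(I^+)$, then satisfies $M_{ij}=\delta_{ij}+O(y)$ and is invertible over $\CC[z,z^{-1}][\log y_1,\ldots,\log y_r][[y_1,\ldots,y_r]]$; applying $z^{-1}P_i$ to a putative relation $\sum_j c_j(z)\,I^+_{(j)}=0$ gives $\sum_j M_{ij}\,c_j(z)=0$ for all $i$, whence all $c_j$ vanish. (One can also read linear independence directly off the leading terms of the explicit formula: for each $\bar k\in\KK_+/\LL$ the lowest-order term supported on $X_+^{[-\bar k]}$ is the fundamental class $\ii_{[-\bar k]}$, and expanding $e^{\sigma_+/z}$ fills out a basis of $H^*(X_+^{[-\bar k]})$ by the presentation in \cite{BCS}.)

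\emph{Part (ii)} is the crux, and the only place the hypothesis $\hat\rho=\sum_{i=1}^m D_i\in\overline{C_+}$ enters. One restricts attention to solutions lying in $\CC[z,z^{-1}][\log y_1,\ldots,\log y_r][[y_1,\ldots,y_r]]$ --- equivalently, to the completion of the GKZ system at the large radius limit point of $C_+$, which is the function space in which the components of $I^+$ live, the logarithms arising from the expansion of $e^{\sigma_+/z}$. Because the system is resonant at $y=0$ with integral characteristic exponents, every local solution is of this type, so the dimension of this space equals the holonomic rank. By Adolphson's theorem \cite{Ad,GKZ} that rank is the normalized volume of the polytope attached to the stacky fan, and under the hypothesis $\hat\rho\in\overline{C_+}$ this normalized volume equals $N=\dim_{\CC}H^*_{CR}(X_+)$ by the stacky-fan combinatorics of \cite{BCS,CIJ}. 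Hence the solution space is $N$-dimensional, and combining with (i) the $N$ linearly independent components $I^+_{(i)}$ form a basis of it.

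The main obstacle is Part (ii), and specifically the bound $\dim(\text{solution space})\le N$. It rests on the GKZ rank formula together with the identification of the relevant normalized volume with $\dim H^*_{CR}(X_+)$, and the extended weak Fano hypothesis is exactly what makes that identification hold: without it the plain system $\{\Delta_d\}$ acquires extra solutions, which is why the subsequent (general) sections must replace it by a larger, suitably completed GKZ system tailored to the wall crossing before extracting the linear map $L$.
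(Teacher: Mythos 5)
The paper's own proof of this proposition is a one-line citation to Lemma~3.9 and Proposition~4.4 of \cite{Iri}; your proposal attempts to reconstruct what is inside that black box, which is a legitimate and instructive thing to do. Your Part~(i) is correct and essentially the standard argument: the components of $I^+$ solve the GKZ system because $\Delta_d$ acts coefficientwise, and linear independence follows cleanly from the big $I$-function property (Definition~\ref{d:bigI}), since the operators $P_i$ furnish a matrix that is the identity modulo $(y)$.

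Part~(ii), however, has two genuine gaps. First, your appeal to Adolphson's theorem (``rank $=$ normalized volume'') is off the mark for the system at hand: that theorem computes the holonomic rank for \emph{non-resonant} parameters (or under a Cohen--Macaulay hypothesis on the toric ideal), whereas the GKZ system \eqref{e:GKZ} is precisely at a resonant parameter --- that is why $I^+$ carries $\log y_j$ terms in the first place. For resonant parameters the rank can jump above the normalized volume (Matusevich--Miller--Walther), so the volume bound cannot be invoked without further argument. Moreover, there is a hidden reduction step: Adolphson's statement concerns the GKZ system in the ``primary'' $\CC^m$-variables, while $\{\Delta_d\}_{d\in\LL}$ lives on $\spec\CC[y_1,\ldots,y_r]$; equating their ranks requires the dimensional-reduction/Fourier--Laplace comparison, which you do not address. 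Second, the claim that ``every local solution is of this type'' (i.e.\ lies in $\CC[z,z^{-1}][\log y][[y]]$) presupposes that $y=0$ is a regular singular point of the system. That regularity is exactly what the extended weak Fano hypothesis $\hat\rho\in\overline{C_+}$ buys you, but in your write-up it is used silently rather than established --- which matters, because on the $X_-$ side the singularity is irregular and $I^-$ is genuinely divergent. These two points (regularity at the large-radius limit, and the identification of the rank with $\dim H^*_{CR}(X_+)$ rather than a potentially larger GKZ rank) are the actual content of Iritani's Lemma~3.9 and Proposition~4.4, so as written your argument leaves unproved precisely the part of the proposition that makes it nontrivial.
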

\begin{proof}
This follows immediately from Lemma 3.9 and Proposition 4.4 of \cite{Iri}.
\end{proof}
We immediately deduce the main result in the case where $X_+$ is extended weak Fano.
\begin{theorem}\label{t:weakfano}
There exists a linear transformation $L: H^*_{CR}(X_+)[z, z^{-1}] \to H^*_{CR}(X_-)[z, z^{-1}]$
such that $L \cdot I^+(y, z) = \iI(y, z) \sim I^-(\tilde y, z)$ as $y_r \mapsto \infty$ along any ray with $|\arg(y_r)|$ sufficiently close to 0. 
\end{theorem}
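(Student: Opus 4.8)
The plan is to assemble the pieces already established in the excerpt. By the Proposition immediately preceding the statement (Iritani's result, via Lemma 3.9 and Proposition 4.4 of \cite{Iri}), under the hypothesis $\hat\rho \in \overline{C_+}$ the components $\{I^+_k\}$ — more precisely the scalar components of $I^+(y,z)$ in a chosen basis of $H^*_{CR}(X_+)$ — form a \emph{basis} of the solution space of the GKZ system $\{\Delta_d f(y,z) = 0\}_{d \in \LL}$ with $\Delta_d$ as in \eqref{e:GKZ}. On the other hand, Proposition~\ref{p:finally} says that every component of $\iI(y_1,\dots,y_r,z)$ is \emph{also} a solution of the same GKZ system $\{\Delta_d \iI = 0\}_{d \in \LL}$. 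Therefore each component of $\iI$ is a (finite) linear combination, with coefficients in the ground ring of constants $\CC[z,z^{-1}]$, of the components of $I^+$. Packaging these scalar relations together defines a $\CC[z,z^{-1}]$-linear map $L: H^*_{CR}(X_+)[z,z^{-1}] \to H^*_{CR}(X_-)[z,z^{-1}]$ with $L\cdot I^+(y,z) = \iI(y,z)$.

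First I would fix bases $\{\phi^+_a\}$ of $H^*_{CR}(X_+)$ and $\{\phi^-_b\}$ of $H^*_{CR}(X_-)$, and write $I^+(y,z) = \sum_a I^+_a(y,z)\,\phi^+_a$ and $\iI(y,z) = \sum_b \iI_b(y,z)\,\phi^-_b$, where $I^+_a, \iI_b$ are scalar-valued functions of $y$ with coefficients in $\CC[z,z^{-1}]$ (together with the $\log y_i$ and formal power series structure in which these functions live). Next I would invoke the preceding Proposition to conclude that $\{I^+_a\}_a$ spans the full GKZ solution space. Then, using Proposition~\ref{p:finally}, each $\iI_b$ lies in this solution space, hence $\iI_b = \sum_a \ell_{ba}\, I^+_a$ for unique scalars $\ell_{ba} \in \CC[z,z^{-1}]$. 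Defining $L(\phi^+_a) := \sum_b \ell_{ba}\, \phi^-_b$ extended $\CC[z,z^{-1}]$-linearly gives the desired map, and by construction $L\cdot I^+(y,z) = \sum_{a,b}\ell_{ba} I^+_a \phi^-_b = \sum_b \iI_b \phi^-_b = \iI(y,z)$. Finally, the asymptotic statement $\iI(y,z) \sim I^-(\tilde y, z)$ as $y_r \to \infty$ along rays with $|\arg(y_r)|$ near $0$ is precisely Proposition~\ref{p:4.4} after the change of variables \eqref{e:cov2} (which matches \eqref{e:cov}, so that $\tilde y_r = u^{(\sum e_j / p_r^- \cdot e)}$ and $u \to \infty$ corresponds to $y_r \to \infty$ along the appropriate rays, noting $c, -p_r^-\cdot e/\sum e_j > 0$).

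The one point requiring a little care — and the step I expect to be the main obstacle, though it is minor — is bookkeeping the coefficient ring in which "basis of solutions" is asserted. The GKZ operators $\Delta_d$ have coefficients involving $z$, and the solutions $I^+_a$ and $\iI_b$ are not honest functions of $y$ but lie in the ring $H^*_{CR}(X_\pm)[z,z^{-1}][\log y_i][[y_i]]$ (and $\iI$ additionally arose from a Laplace transform, so one should confirm it genuinely lands in the same class of formal/analytic object). I would address this by working in the common analytic domain near $y_r = \infty$ (for $I^+$, guaranteed analytic there by Proposition~\ref{p:an+}) where all the relevant functions are honest analytic functions of $y_1,\dots,y_r$, matching the setting of \cite{Iri}; the linear-dependence coefficients $\ell_{ba}$ are then constants in $y$, i.e.\ elements of $\CC[z,z^{-1}]$ (a priori functions of $z$, but the explicit $z$-dependence of the $I$-functions, polynomial in $z, z^{-1}$ up to the overall $ze^{\sigma/z}$ factor, forces them into $\CC[z,z^{-1}]$ after comparing the $e^{\sigma_\pm/z}$ prefactors). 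Having pinned this down, the theorem follows immediately by combining the displayed equation $L\cdot I^+ = \iI$ with Proposition~\ref{p:4.4}.
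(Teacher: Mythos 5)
Your proof is correct and follows essentially the same route as the paper: combine Proposition~\ref{p:finally} (components of $\iI$ solve the GKZ system) with the preceding proposition (components of $I^+$ give a basis of solutions when $\hat\rho \in \overline{C_+}$) to extract the linear map $L$, then invoke Proposition~\ref{p:4.4} for the asymptotic statement. The extra bookkeeping you supply about the coefficient ring and where the linear-dependence coefficients live is a reasonable fleshing-out of what the paper leaves implicit.
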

\begin{proof}
By Proposition~\ref{p:finally}, the components of $\iI(y, z)$ give solutions to the GKZ system of equations.  By the previous proposition,
each component of $\iI(y, z)$ must therefore be a linear combination of the components of $I^+(y, z)$.  This uniquely defines the map $L: H^*_{CR}(X_+)[z, z^{-1}] \to H^*_{CR}(X_-)[z, z^{-1}]$.

The second claim, that $\iI(y, z) \sim I^-(\tilde y, z)$ as $y_r \mapsto \infty$, is Proposition~\ref{p:4.4}.  See \eqref{e:cov2} for the change of variables between $u$ and $y_r$.
\end{proof}

\subsection{The general case}
When $X_+$ is not extended weak Fano, the dimension of the GKZ system is larger than $\text{dim}(H^*_{CR}(X_+))$, and so the $I$-function $I^+$ no longer generates all solutions.
%
In this more general setting, one must consider the completion of the corresponding $D$-module at $y_1= \cdots = y_r = 0$ to recover the full set of differential equations satisfied by $I^+$.  The proof closely follows the argument of Iritani given in Section 4.2 of~\cite{Iri}, but with the crucial adjustment to the non-weak Fano case provided by a dimension bound from Gonzalez--Woodward \cite{GW2}.
\begin{notation} To simplify the expressions to come, we will use the following notational substitution when defining various rings and modules: $ y := y_1, \ldots, y_r$;
$y^{-1} := y_1^{-1}, \ldots, y_r^{-1}$; $ \tfrac{\partial}{\partial y}:=\tfrac{\partial}{\partial y_1}, \ldots, \tfrac{\partial}{\partial y_r}$ etc...  Let $D$ denote the ring of differential operators on $\CC[y]$ with the usual relations.
%
%
\end{notation}

Let $\mathscr{I}_{GKZ}$ denote the ideal in $ \CC[z][y, y^{-1}]\langle zy \tfrac{\partial}{\partial y} \rangle$
generated by the differential operators $\Delta_d$ of \eqref{e:GKZ}.
Define the $D$-module $M_{GKZ} := \CC[z][y, y^{-1}]\langle zy \partial/\partial y \rangle/ \mathscr{I}_{GKZ}$.

Let $\mathscr{I}_{poly}$ denote the intersection $\mathscr{I}_{GKZ} \cap \CC[z][y]\langle zy \tfrac{\partial}{\partial y} \rangle$ and consider the corresponding $D$-module  $M_{poly} := \CC[z][y]\langle z y \tfrac{\partial}{\partial y} \rangle/ \mathscr{I}_{poly}$.  Let $\widehat M_{poly}$ denote $\CC[z][[y]]\langle zy \tfrac{\partial}{\partial y} \rangle/ \overline{\mathscr{I}_{poly}}$, where $\overline{\mathscr{I}_{poly}}$ is the closure of $\mathscr{I}_{poly}$ in $\CC[z][[y]]\langle zy \tfrac{\partial}{\partial y} \rangle$ in the $y$-adic topology.

By Proposition 4.4 of \cite{Iri}, there exists a Zariski open subset $\cc M^\circ$ of 
$\spec \CC[y, y^{-1}]$
 on which $M_{GKZ}^\circ := \s O_{\cc M^\circ}[z] \otimes_{\CC[z, y, y^{-1}]} M_{GKZ}  $ is finitely generated as an $\s O_{\cc M^\circ}[z]$-module.  By Lemma 3.8 of \cite{Iri}, we may assume, after possibly shrinking $\cc M^\circ$, that $\spec \CC[y] \setminus \cc M^\circ$ is a union of hypersurfaces $V' \cup V''$, where every irreducible component of $V'$ avoids the origin, and $V''$ is given by the union of the coordinate hyperplanes $\{y_1 \cdots y_r = 0\}$.  Then $\cc M^\circ =  \cc M' \cap \spec \CC[y, y^{-1}]$, where $\cc M' =\cc M \setminus V'$.  
 Note that $\widehat{ \Gamma(\s O_{\cc M '})}$, the ring of formal functions on $\cc M '$ in a neighborhood of the origin, is equal to $\widehat{\CC[y]} = \CC[[y]]$.

\begin{proposition}  The $D$-module $\widehat M_{poly}^\circ:= \widehat M_{poly}[y^{-1}]$
is a coherent sheaf of rank at most $\dim \left( H^*_{CR}(X_+)\right)$ over $\spec( \CC[z][[y]][y^{-1}])$.
\end{proposition}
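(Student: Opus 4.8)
The plan is to bound the rank of $\widehat M_{poly}^\circ$ over the punctured formal disk by comparing it, via a localization-and-completion argument, with the rank of the coherent sheaf $M_{GKZ}^\circ$ over $\cc M^\circ$, and then invoke a dimension bound to identify that generic rank with $\dim H^*_{CR}(X_+)$. First I would recall that $M_{GKZ}^\circ = \s O_{\cc M^\circ}[z]\otimes M_{GKZ}$ is finitely generated over $\s O_{\cc M^\circ}[z]$ by Proposition 4.4 of \cite{Iri}, so it is coherent; its generic rank over $\spec\CC[y,y^{-1}]$ is some integer $\mu$. The key geometric input is the description $\spec\CC[y]\setminus\cc M^\circ = V'\cup V''$ with $V'$ avoiding the origin and $V''=\{y_1\cdots y_r=0\}$, so that the ring of formal functions on $\cc M'$ at the origin is exactly $\CC[[y]]$. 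Completing $M_{GKZ}^\circ$ at the origin and then localizing at $y_1\cdots y_r$ therefore produces precisely $\widehat M_{poly}^\circ = \widehat M_{poly}[y^{-1}]$: the point is that passing from $M_{GKZ}$ to $M_{poly}$ and completing is compatible with restricting to $\cc M'$, because $\mathscr I_{poly} = \mathscr I_{GKZ}\cap\CC[z][y]\langle zy\partial/\partial y\rangle$ and $\overline{\mathscr I_{poly}}$ is its $y$-adic closure. Since completion is flat and localization is flat, $\widehat M_{poly}^\circ$ is coherent over $\spec(\CC[z][[y]][y^{-1}])$ of the same generic rank $\mu$ as $M_{GKZ}^\circ$.

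The remaining step is to show $\mu \leq \dim H^*_{CR}(X_+)$. Here I would use the Gonzalez--Woodward dimension bound \cite{GW2}, exactly as flagged in the paragraph preceding the proposition: the rank of the GKZ $D$-module restricted to a generic point equals the number of solutions, which is bounded above by the dimension of the Chen--Ruan cohomology of $X_+$ (this is the non-weak-Fano substitute for the weak-Fano case, where the $I^+$-components form a \emph{basis}, i.e.\ equality holds). Concretely, $I^+(y,z)$ and its derivatives span a subspace of solutions of dimension $\dim H^*_{CR}(X_+)$; the GW bound says there are no more, so $\mu = \dim H^*_{CR}(X_+)$ generically — in any case $\mu \leq \dim H^*_{CR}(X_+)$, which is all we need.

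Putting these together: $\widehat M_{poly}^\circ$ is coherent over $\spec(\CC[z][[y]][y^{-1}])$ (from finite generation of $M_{GKZ}^\circ$, transported across the flat maps of completion and localization) and its rank is the generic rank $\mu \leq \dim H^*_{CR}(X_+)$. The main obstacle I anticipate is the bookkeeping in the second paragraph — verifying carefully that the completion of the localized module $M_{GKZ}^\circ$ along the origin literally \emph{is} $\widehat M_{poly}[y^{-1}]$, i.e.\ that the two operations "take the polynomial sub-$D$-module, then complete" and "complete $M_{GKZ}^\circ$, then localize" agree. This requires knowing that the irreducible components of $V'$ genuinely avoid the origin (so that $\Gamma(\s O_{\cc M'})^\wedge = \CC[[y]]$ with nothing extra inverted) and that the $y$-adic closure $\overline{\mathscr I_{poly}}$ captures exactly the relations surviving completion; the rest is flatness of completion over a Noetherian ring and flatness of localization. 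Once that identification is in hand the rank statement is immediate.
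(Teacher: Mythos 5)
There is a genuine gap in the second half of your argument, and it sits exactly where you wave it off as ``bookkeeping.'' Your rank bound rests on two claims: (i) that completing $M_{GKZ}^\circ$ at the origin and inverting $y_1\cdots y_r$ yields $\widehat M_{poly}^\circ$ \emph{isomorphically}, so that flatness preserves the generic rank $\mu$ of $M_{GKZ}^\circ$; and (ii) that $\mu\leq \dim H^*_{CR}(X_+)$ by a Gonzalez--Woodward ``solution count.'' Both fail. For (ii): the section you are in is precisely the non--extended-weak-Fano case, where (as the paper states just before this proposition) the dimension of the GKZ system is \emph{strictly larger} than $\dim H^*_{CR}(X_+)$, so the components of $I^+$ do not span all solutions and no such bound on $\mu$ holds. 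For (i): the completed module is defined using $\overline{\mathscr I_{poly}}$, the $y$-adic \emph{closure} of $\mathscr I_{poly}$, which is in general strictly larger than the ideal generated by $\mathscr I_{poly}$ after completion --- extra operators annihilating $I^+$ appear only in the limit, and this is the entire mechanism by which the rank drops from $\mu$ down to $\dim H^*_{CR}(X_+)$. The paper accordingly only uses a \emph{surjection} $\CC[z][[y]]\otimes_{\s O_{\cc M'}[z]} M_{poly}^\circ \to \widehat M_{poly}^\circ$, which suffices for finite generation (so your coherence argument is essentially the paper's and is fine) but deliberately does not claim rank preservation.

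The actual rank bound comes from a different place entirely, which your proposal never touches: one restricts the coherent sheaf $\widehat M_{poly}^\circ$ to $z=0$, identifies that restriction with the completed, localized Batyrev ring $\CC[[y]][y^{-1}]\otimes_{\CC[y]} B(X_+)$, and invokes Theorem 4.23 of Gonzalez--Woodward, which computes the rank of \emph{that} ring to be exactly $\dim H^*_{CR}(X_+)$. Nakayama's lemma (upper semicontinuity of fiber dimension) then bounds the fiber dimension at every point of $\spec(\CC[z][[y]][y^{-1}])$ by $\dim H^*_{CR}(X_+)$. Without the restriction to $z=0$ and the Batyrev-ring identification, you have no way to get the number $\dim H^*_{CR}(X_+)$ into the argument at all in the non-weak-Fano setting.
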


\begin{proof}

$M_{GKZ}^\circ$ is finitely generated as an $\s O_{\cc M^\circ}$-module.  
Note that $M_{GKZ}^\circ$ is equal to $M_{poly}^\circ := \s O_{\cc M^\circ}[z] \otimes_{\CC[z, y]} M_{poly}  $, since $M_{GKZ}$ equals $M_{poly}$ over $\spec \CC[y, y^{-1}] \supset \cc M^\circ$.
Thus $M_{poly}^\circ$ is finitely generated as an $\s O_{\cc M^\circ}[z]$-module, and therefore $ \CC[z][[y]]\otimes_{\s O_{\cc M '}[z]} M_{poly}^\circ $ is finitely generated as a $ \CC[z][[y]]\otimes_{\s O_{\cc M ' }[z]} \s O_{\cc M^\circ}[z]$-module.  By the surjection $\CC[z][[y]] \otimes_{\s O_{\cc M '}[z]} M_{poly}^\circ   \to \widehat M_{poly}^\circ$ we conclude that $\widehat M_{poly}^\circ$ is finitely generated as a  $ \CC[z][[y]]\otimes_{\s O_{\cc M ' }[z]} \s O_{\cc M^\circ}[z] = \CC[z][[y]][y^{-1}]$-module.
%
%

Thus $\widehat{M}_{poly}^\circ$ is a coherent sheaf over $\spec(\CC[z][[y]][y^{-1}]  )$.  $\widehat{M}_{poly}^\circ$ is also endowed with a flat connection away from $z = 0$ as described in section 4.2 of \cite{Iri}.  Therefore $\widehat{M}_{poly}^\circ$ is locally free away from $z = 0$.  The restriction to $z = 0$ of $\widehat{M}_{poly}^\circ$ is isomorphic to the completion of the Batyrev ring: $\CC[[y]][y^{-1}] \otimes_{\CC[y]} B(X_+)  $ (see \cite{Ba} or \cite{GW2} for the definition of the Batyrev ring $B(X_+)$). 
By Theorem 4.23 of Gonzalez-Woodward \cite{GW2}, this has rank equal to  $\dim \left(H^*_{CR}(X_+) \right)$.
By Nakayama's lemma, the fiber over every point of $\spec( \CC[z][[y]][y^{-1}] )$
has dimension at most $\dim \left( H^*_{CR}(X_+)\right)$
\end{proof}

Due to the presence of powers of $\log(y_j)/z$ in the $I$-function $I^+(y, z)$, we must extend scalars in order to compare $I^+(y, z)$ to the GKZ $D$-module.  
Define \[\widehat{M}^\circ_{poly}[\log(y)] : = \CC[y, z, 1/z, \log(y)] \otimes_{\CC[y, z]} \widehat{M}^\circ_{poly}.\]
Then $\widehat{M}^\circ_{poly}[\log(y)]$ is a coherent sheaf of rank at most $\text{dim}(H^*_{CR}(X_+))$ over $\spec(  \CC[z, 1/z] [[y]] [y^{-1}, \log(y)])$.   $\widehat{M}^\circ_{poly}[\log(y)]$ also acquires the structure of a 
$D$-module by defining \[ \frac{\partial }{\partial y_j} \log(y_m) := \frac{\delta_{j,m}}{y_m}.\]

Define a map of $\CC[z, 1/z] [[y]] [y^{-1}, \log(y)]\otimes_{\CC[y]} D$-modules 
\[L^+: \widehat{M}^\circ_{poly}[\log(y)] \to H^*_{CR}(X_+) \otimes \CC[z, 1/z] [[y]] [y^{-1}, \log(y)]\] 
 by sending $P(z, y, \log(y), zy \tfrac{\partial}{\partial y}) \mapsto P(z, y, \log(y), zy \tfrac{\partial}{\partial y})\left(I^+(y, z)\right)$.  Since $I^+$ satisfies $P \cdot (I^+) = 0$ for all $P \in \s I_{poly}$, it follows by continuity that $I^+$ is annihilated by all of $ \overline{ \s I_{poly}}$.  Since $I^+$ is a \emph{big} $I$-function (see \eqref{e:bigbig}), the map $L^+$ is surjective.  Comparing ranks by the above proposition we conclude the following:
%
%
%
%
%
\begin{corollary}\label{c:2tolast}
The map 
\[L^+: \widehat{M}^\circ_{poly}[\log(y)] \to H^*_{CR}(X_+) \otimes \CC[z, 1/z] [[y]] [y^{-1}, \log(y)]\] defined above is an isomorphism of $\CC[z, 1/z] [[y]] [y^{-1}, \log(y)] \otimes_{\CC[y]} D$-modules.
%
\end{corollary}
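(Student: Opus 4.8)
The plan is to establish Corollary~\ref{c:2tolast} as an immediate consequence of the preceding proposition together with the surjectivity of $L^+$ and a rank count, exactly as sketched in the paragraph before the corollary; the task is really to verify that all the hypotheses of a ``surjective map of coherent sheaves of the same generic rank is an isomorphism'' argument are in place. First I would record that the target $H^*_{CR}(X_+) \otimes \CC[z, 1/z] [[y]] [y^{-1}, \log(y)]$ is a free module of rank $\dim(H^*_{CR}(X_+))$ over $R := \CC[z, 1/z] [[y]] [y^{-1}, \log(y)]$, while by the previous proposition (after the harmless base change adjoining $\log(y)$ and inverting $z$) the source $\widehat{M}^\circ_{poly}[\log(y)]$ is a coherent $R$-module of rank at most $\dim(H^*_{CR}(X_+))$. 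Since $L^+$ is surjective by the big $I$-function property~\eqref{e:bigbig}, and since surjections of finitely generated modules cannot strictly decrease rank, the source must have rank exactly $\dim(H^*_{CR}(X_+))$.

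Next I would upgrade ``surjective plus equal ranks'' to ``isomorphism.'' Over the ring $R$, which is a localization of a power series ring tensored with a field in $z$ and a polynomial ring in the $\log(y_j)$, one checks that $R$ has no embedded associated primes obstructing the argument: the kernel $\ker(L^+)$ is a coherent submodule of $\widehat{M}^\circ_{poly}[\log(y)]$ which is generically (at the generic point of $\spec R$, or more precisely after inverting $y_1 \cdots y_r$ and passing to the fraction field in $y$) zero, hence torsion; but $\widehat{M}^\circ_{poly}[\log(y)]$ is locally free away from $z = 0$ by the discussion in the proof of the previous proposition (the flat connection argument of \cite[\S4.2]{Iri}), so it is torsion-free there, forcing $\ker(L^+) = 0$ away from $z = 0$, and since we have inverted $z$ this is all of $\spec R$. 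Thus $L^+$ is injective as well, hence an isomorphism of $R$-modules. The $D$-module structure is respected because $L^+$ was defined by $P \mapsto P(I^+)$ and the $D$-action on both sides is by composition/differentiation of operators, which is tautologically compatible.

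The one point that requires genuine care -- and which I expect to be the main obstacle -- is the claim that $I^+$ is annihilated by the full closure $\overline{\s I_{poly}}$ rather than merely by $\s I_{poly}$ itself, since this is what makes $L^+$ well-defined on $\widehat{M}^\circ_{poly}[\log(y)]$ in the first place. The resolution is a continuity argument: for $P \in \overline{\s I_{poly}}$, write $P = \lim_n P_n$ with $P_n \in \s I_{poly}$ in the $y$-adic topology on $\CC[z][[y]]\langle zy\partial/\partial y\rangle$; then $P_n(I^+) = 0$ for all $n$, and one must check that $Q \mapsto Q(I^+)$ is continuous for the $y$-adic topology, i.e. that applying a differential operator whose coefficients lie in a high power of the ideal $(y)$ produces a result lying in a high power of $(y)$ when applied to $I^+(y,z)$ -- this holds because $I^+(y,z) = ze^{\sigma_+/z}\sum_{k \in \KK_+} y^k I^+_k$ has a power series expansion in the $y_i$ with coefficients in $H^*_{CR}(X_+)[z,z^{-1}][\log y]$, and the operators $zy_i\partial/\partial y_i$ preserve $y$-adic filtration up to a bounded shift while the $\log(y_j)$-derivatives only lower $\log$-degree. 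Once this continuity is in hand, $P(I^+) = 0$ for all $P \in \overline{\s I_{poly}}$, so $L^+$ descends to the quotient $\widehat{M}^\circ_{poly}$, extends freely over $\log(y)$ and $1/z$, and the rank comparison above concludes the proof.
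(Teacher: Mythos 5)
Your proposal is correct and follows essentially the same route as the paper: the corollary's content is precisely the ``surjective plus rank comparison implies isomorphism'' argument sketched in the paragraph preceding the corollary (surjectivity from the big $I$-function property, rank bound from the previous proposition, well-definedness on the completion via continuity), and you have correctly identified the continuity of $P \mapsto P(I^+)$ as the point requiring care, just as the paper does. The one place you add detail the paper leaves implicit is upgrading the rank count to injectivity via the torsion/local-freeness argument away from $z=0$; this is the intended reading of ``comparing ranks,'' so the reasoning is the same.
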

We arrive at our main result.
\begin{theorem}\label{t:finalresult}
There exists a linear transformation $L: H^*_{CR}(X_+)[z, z^{-1}] \to H^*_{CR}(X_-)[z, z^{-1}]$
such that $L \cdot I^+(y, z) = \iI(y, z) \sim I^-(\tilde y, z)$ as $y_r \mapsto \infty$ along a ray with $|\arg(y_r)|$ sufficiently close to 0.  
\end{theorem}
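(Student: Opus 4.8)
The plan is to bootstrap from the weak Fano case (Theorem~\ref{t:weakfano}) by replacing the role of the GKZ $D$-module $M_{GKZ}$ with its completion $\widehat M^\circ_{poly}[\log(y)]$ at the origin, which, by Corollary~\ref{c:2tolast}, is precisely the $D$-module whose solution space is spanned by the components of $I^+(y,z)$. Concretely, I would proceed as follows. First, by Proposition~\ref{p:finally}, the components of $\iI(y,z)$ satisfy $\Delta_d \iI(y,z) = 0$ for all $d \in \LL$, i.e. $\iI(y,z)$ is annihilated by $\mathscr{I}_{GKZ}$. Since $\iI(y,z)$ is, after the change of variables \eqref{e:cov2}, a series in $y_1,\dots,y_r$ with coefficients in $H^*_{CR}(X_-)[z,z^{-1}][\log y_1,\dots,\log y_r]$ (the logarithms entering through $e^{\sigma_-/z}$), each component of $\iI$ defines an element of $\Hom_{D}\!\big(\widehat M^\circ_{poly}[\log(y)],\, \CC[z,1/z][[y]][y^{-1},\log(y)]\big)$, once one checks that it is annihilated not merely by $\mathscr I_{poly}$ but by its $y$-adic closure $\overline{\mathscr I_{poly}}$ — and this follows by continuity exactly as in the argument preceding Corollary~\ref{c:2tolast}.

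Next, I would invoke Corollary~\ref{c:2tolast}: the map $L^+$ sending $P \mapsto P(I^+(y,z))$ is an isomorphism of $D$-modules $\widehat M^\circ_{poly}[\log(y)] \xrightarrow{\ \sim\ } H^*_{CR}(X_+) \otimes \CC[z,1/z][[y]][y^{-1},\log(y)]$. Composing the inverse $(L^+)^{-1}$ with the evaluation-at-$\iI$ homomorphism produces, for each basis vector $\phi_i$ of $H^*_{CR}(X_+)$, a well-defined differential operator $P_i$ with $P_i(I^+) = \phi_i + O(y)$ (by bigness) and such that $P_i(\iI)$ is the $i$th "component" of $\iI$ in a compatible sense; assembling these across $i$ yields a linear map $L: H^*_{CR}(X_+)[z,z^{-1}] \to H^*_{CR}(X_-)[z,z^{-1}]$ with $L\cdot I^+(y,z) = \iI(y,z)$. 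The point is that any solution of the completed GKZ system with values in a module over the base ring is, via the isomorphism $L^+$, the image of a \emph{unique} element of $H^*_{CR}(X_+) \otimes (\text{base ring})$ under the tautological pairing, and $\CC[z,z^{-1}]$-linearity of $L$ is automatic because $\iI$ and $I^+$ depend on $z$ only through the coefficient ring. Finally, the asymptotic statement $\iI(y,z) \sim I^-(\tilde y, z)$ as $y_r \to \infty$ along a ray with $|\arg(y_r)|$ close to $0$ is exactly Proposition~\ref{p:4.4}, transported through the change of variables \eqref{e:cov2} relating $u$ and $y_r$.

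The main obstacle is ensuring that $\iI(y,z)$ genuinely lives in the right completed module — that is, that after the substitution \eqref{e:cov2} it is a \emph{formal} power series in $y_1,\dots,y_r$ (near the origin) with the prescribed log-polynomial coefficients, so that the $D$-module formalism of Corollary~\ref{c:2tolast} applies verbatim. This requires unwinding the Laplace transform construction: $\iI$ is a priori an analytic function of $u$ for $|\arg u| < \pi/2$, and one must check that its expansion as $u \to \infty$ along the relevant ray, re-expressed in $y_r$, has nonnegative powers of $y_r$ and the $\log$-structure dictated by $\sigma_-$ — this is implicit in Propositions~\ref{p:4.4} and~\ref{p:finally} but needs to be stated carefully as membership in $\CC[z,1/z][[y]][y^{-1},\log y]$. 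Once that is in place, the rest is the formal $D$-module argument: solutions of the completed GKZ system are spanned by the components of $I^+$, hence $\iI$ is obtained from $I^+$ by a constant (in $y$) linear transformation, which is the desired $L$.
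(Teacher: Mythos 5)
Your proposal follows the paper's proof closely: both construct $L$ as the composition $L^- \circ (L^+)^{-1}$, where $L^+$ (invertible by Corollary~\ref{c:2tolast}) and $L^-$ are the evaluation maps on the completed $D$-module $\widehat M^\circ_{poly}[\log(y)]$ sending $P \mapsto P(I^+)$ and $P \mapsto P(\iI)$ respectively, with Proposition~\ref{p:finally} supplying the fact that $\iI$ solves the GKZ system and Proposition~\ref{p:4.4} supplying the asymptotic statement. The concern you raise in the final paragraph --- whether $\iI$, after the change of variables~\eqref{e:cov2}, genuinely lies in $H^*_{CR}(X_-)\otimes\CC[z,1/z][[y]][y^{-1},\log(y)]$ so that $L^-$ is well defined --- is a legitimate point that the paper itself invokes only with the phrase ``by the same argument as in the previous corollary''; flagging it is reasonable and not a defect of your argument.

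There is, however, one real gap. You assert that ``$\CC[z,z^{-1}]$-linearity of $L$ is automatic because $\iI$ and $I^+$ depend on $z$ only through the coefficient ring.'' This would explain why $L$ is linear over $\CC[z,z^{-1}]$ as part of the coefficient ring, but it does \emph{not} address the point actually at issue: a priori $L = L^- \circ (L^+)^{-1}$ is a map $H^*_{CR}(X_+)\otimes\CC[z,1/z][[y]][y^{-1},\log(y)] \to H^*_{CR}(X_-)\otimes\CC[z,1/z][[y]][y^{-1},\log(y)]$, and one must show that it sends the subspace $H^*_{CR}(X_+)[z,z^{-1}]$ of $y$-constant vectors into $H^*_{CR}(X_-)[z,z^{-1}]$, i.e.\ that $L$ kills no information and produces a $y$-constant answer when fed a $y$-constant input. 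You state the conclusion (``$\iI$ is obtained from $I^+$ by a constant (in $y$) linear transformation'') but not the reason. The paper supplies the missing step: since $L^\pm$ are maps of $D$-modules, $\tfrac{\partial}{\partial y_j} L^\pm = L^\pm \tfrac{\partial}{\partial y_j}$, and a function $f \in \CC[z,1/z][[y]][y^{-1},\log(y)]$ with $\tfrac{\partial}{\partial y_j}f = 0$ for all $j$ is constant; applying this to $L(\phi)$ for $\phi \in H^*_{CR}(X_+)[z,z^{-1}]$ shows $L(\phi)$ is $y$-constant, and hence $L$ restricts to the desired map $H^*_{CR}(X_+)[z,z^{-1}] \to H^*_{CR}(X_-)[z,z^{-1}]$.
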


\begin{proof}
By the same argument as in the previous corollary, there is a well-defined map of $\CC[z, 1/z] [[y]] [y^{-1}, \log(y)] \otimes_{\CC[y]} D$-modules 
\[L^-: \widehat{M}^\circ_{poly}[\log(y)] \to H^*_{CR}(X_-) \otimes \CC[z, 1/z] [[y]] [y^{-1}, \log(y)]\]
 defined by $P(z, y, \log(y), zy \tfrac{\partial}{\partial y}) \mapsto P(z, y, \log(y), zy \tfrac{\partial}{\partial y})\left( \iI(y, z) \right)$.  Define $L$ to be the composition $L^- \circ (L^+)^{-1}$, viewed as a map of modules (rather than sheaves).  Since 
both $L^+$ and $L^-$ are maps of $D$-modules, 
 $ \tfrac{\partial }{ \partial y_j}  L^{\pm} = L^{\pm}\tfrac{\partial }{ \partial y_j} $ for $1 \leq j \leq r$.  
 One can check that a function $f(y, \log(y), z) \in \CC[z, 1/z] [[y]] [y^{-1}, \log(y)]$ is constant if and only if $\tfrac{\partial}{\partial y_j} f(y, \log(y), z) = 0$ for $1 \leq j \leq r$. 
We conclude that $L^+$ and $L^-$ are both $\CC[z, z^{-1}]$-linear maps, and therefore so is $L$.

The second claim, that $\iI(y, z) \sim I^-(\tilde y, z)$ as $y_r \mapsto \infty$, is Proposition~\ref{p:4.4}.

%
\end{proof}

\section{Complete intersections}
In this section we use the above results combined with the quantum Lefschetz principle to deduce corresponding statements for certain birational complete intersections  $Y_+ \subset X_+$ and $Y_- \subset X_-$.

Given $\phi: X_+  \dasharrow X_-$ as in Section~\ref{s:wc} and a choice of characters $E_1, \ldots, E_n \in \LL^\vee$ (see Section~\ref{s:3} for notation), one obtains corresponding line bundles $L_{1, \pm}, \ldots, L_{n, \pm}$ over $X_\pm$.  Let $E_+  = \oplus_j L_{j, +} \to X_+$ and $E_- = \oplus L_{j, -} \to X_-$ denote vector bundles obtained as the direct sum of the $+/-$ line bundles.  Let $s_+$ and $s_-$ be regular sections of $E_+$ and $E_-$ respectively, and let $Y_+ \subset X_+$ and $Y_- \subset X_-$. 
denote the corresponding substacks.

Following \cite{CIJ}, we require that:
\begin{enumerate}
\item For $1 \leq j \leq n$, $E_j$ lies in $\overline C_W$;
\item For $1 \leq j \leq n$, $L_{j, \pm}$ is pulled back from the coarse space $|X_{\pm}|$;
\item $s_+$ and $s_-$ are compatible with the map $\phi: X_+  \dasharrow X_-$;
\end{enumerate}
In the above situation, $\phi$ induces a birational map $\phi_Y: Y_+ \dasharrow Y_-$.

For $1 \leq j \leq n$, let  $v_j \in H^*(X_\pm)$ denote $c_1(L_{j, \pm})$ (by abuse of notation we will let $v_j$ denote the cohomology class in either $X_+$ or $X_-$).
We define the \emph{$E_\pm$-twisted} $I$-function on $X_\pm$ as 
\begin{align}\label{e:Itwisted} 
I^+_{tw}( y ,z) &:= ze^{\sigma_+/z} \sum_{k \in \KK_+}   y^k I^+_k
\left(  \prod_{j=1}^n \prod_{a=1}^{E_j \cdot k} (v_j + az)  \right) \\ \nonumber
I^-_{tw}( \tilde y ,z) &:= ze^{\sigma_-/z} \sum_{k \in \KK_-}  \tilde y^k I^-_k
\left(  \prod_{j=1}^n \prod_{a=1}^{E_j \cdot k} (v_j + az)  \right). 
 \end{align}
 The twisted $I$-functions are almost $I$-functions for $Y_+$ and $Y_-$, except that they take values in the Chen--Ruan cohomology of $X_\pm$ rather than $Y_\pm$.  Their relation to the Gromov--Witten theory of $Y_\pm$ comes via the so-called \emph{quantum Lefschetz principle}, which can be phrased as follows.  Let 
\begin{align}\label{e:IY} 
I^+_{Y}( y ,z) &:= e(E_+) I^+_{tw}(y, z) = \left(\prod_{j=1}^n v_j \right) I^+_{tw}(y, z) \\ \nonumber
I^-_{Y}( \tilde y ,z) &:= e(E_-) I^-_{tw}( \tilde y ,z) = \left(\prod_{j=1}^n v_j \right) I^-_{tw}( \tilde y ,z). 
 \end{align}
 \begin{theorem}(\cite{CCIT3}) 
$ I^\pm_{Y}$ is the pushforward of an $I$-function on $Y_\pm$ via the map $i_\pm: Y_\pm \to X_\pm$.  More precisely, there exists a function $Z(y, z) \in H^*_{CR}(X_+)[z][[y]]$ and a map $y \mapsto \btau (y) \in H^*_{CR}(X_+)$ such that 
\[I^+_{Y}( y ,z) = (i_+)_* \left(  z{\bf J}^{Y_+}(i_+^*(\btau(y)), z)(i_+^*Z(y, z)) \right).\] The analogous statement holds for $I^-_Y$.
 \end{theorem}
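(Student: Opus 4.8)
The plan is to reduce to the known quantum Lefschetz statement for $X_+$ (and $X_-$ separately) and then track how the twisting interacts with the Birkhoff-factorization that produces the $J$-function from a big $I$-function. First I would invoke the twisted mirror theorem of Coates--Corti--Iritani--Tseng \cite{CCIT3}: the function $I^+_{tw}(y,z)$, which lives in $H^*_{CR}(X_+)[z][[y]]((z^{-1}))$, lies on the Lagrangian cone of the $(E_+, e)$-twisted Gromov--Witten theory of $X_+$, where $e$ denotes the Euler class. Concretely this says $I^+_{tw}(y,z)$ is of the form $z\mathbf{J}^{X_+, tw}(\btau(y), z)(Z(y,z))$ for some $Z(y,z) \in H^*_{CR}(X_+)[z][[y]]$ with only positive powers of $z$ and some mirror map $\btau(y)$. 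The same holds on the minus side.

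Next I would pass from the twisted theory on $X_\pm$ to the honest Gromov--Witten theory of $Y_\pm$. The non-equivariant limit of the $(E_\pm, e)$-twisted theory, after multiplying by the Euler class $e(E_\pm) = \prod_j v_j$, computes Gromov--Witten invariants of the complete intersection $Y_\pm = Z(s_\pm)$; this is precisely the content of the quantum Lefschetz hyperplane principle (see \cite{Co} for a modern treatment, or \cite{CCIT3} in the orbifold setting). The functoriality here is $(i_\pm)_*$: classes on $X_\pm$ that are pulled back along $i_\pm$ and then pushed forward recover $e(E_\pm) \cup (-)$, and the $\mathbf{J}$-operator of the twisted theory restricts compatibly to the $\mathbf{J}$-operator of $Y_\pm$. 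Carefully: one checks that $e(E_+) \cdot z\mathbf{J}^{X_+, tw}(\btau(y),z)(Z(y,z)) = (i_+)_*\big(z\mathbf{J}^{Y_+}(i_+^*\btau(y), z)(i_+^* Z(y,z))\big)$, which is exactly the assertion, by comparing the defining double-bracket expansions term by term using the virtual pushforward relating $[\sMbar_{0,n}(Y_+, d)]^{vir}$ to the Euler class of the obstruction bundle on $\sMbar_{0,n}(X_+, d)$. The statement for $Y_-$ is identical with $+$ replaced by $-$ and $y$ replaced by $\tilde y$.

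The main obstacle I anticipate is bookkeeping around the mirror map and the positivity-in-$z$ condition on $Z(y,z)$: one must verify that the $Z$ and $\btau$ produced by the twisted mirror theorem on $X_+$ actually pull back to a legitimate mirror map and $I$-function datum on $Y_+$ (i.e.\ that $i_+^* Z(y,z)$ still has only positive $z$-powers, which is automatic, and that $i_+^*\btau(y)$ is a well-defined point of $H^*_{CR}(Y_+)$). This is where hypotheses (1)--(3) on the characters $E_j$ and sections $s_\pm$ enter: condition (2) guarantees the twisting classes $v_j$ are pulled back from the coarse space so that the convex/split hypotheses of \cite{CCIT3} hold and the twisted $I$-function has the right shape, and condition (3) guarantees $\phi$ restricts to $\phi_Y$ so that the two sides are genuinely comparable. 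A secondary subtlety is that $I^\pm_{tw}$ need not itself be a \emph{big} $I$-function; but the theorem as stated only claims it is \emph{an} $I$-function (lies on the cone), which is the weaker conclusion that the twisted mirror theorem directly supplies, so no Birkhoff-factorization argument is actually needed here — that issue is deferred to the subsequent comparison using Theorem~\ref{t:finalresult}.
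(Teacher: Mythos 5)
Your proposal is correct and follows essentially the same route as the paper: invoke the twisted mirror theorem of \cite{CCIT3} to write $I^+_{tw}$ as $z\mathbf{J}^{X_+,tw}(\btau(y),z)(Z(y,z))$, then apply the quantum Lefschetz functoriality of \cite{Co} (Theorem 1.1) together with the projection formula $(i_+)_*\circ i_+^* = e(E_+)\cup(-)$ to obtain the pushforward statement. The paper simply cites these two results directly rather than re-deriving the virtual pushforward comparison, but the logical structure is identical.
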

 
 \begin{proof}
 By Corollary 23 of \cite{CCIT3}, there exist $Z(y, z) \in H^*_{CR}(X_+)[z][[y]]$ and $y \mapsto \btau (y) \in H^*_{CR}(X_+)$ such that $I^+_{tw}( y ,z)$ can be written as $z{\bf J}^{+}_{tw}(\btau(y), z)(Z(y, z))$, where $J^+_{tw}$ is the $E_+$-twisted $J$-function on $X_+$ (see \cite{CCIT3} for a discussion).  By Theorem 1.1 in \cite{Co}, 
 \[i_+^*\left( z{\bf J}^{+}_{tw}(\btau(y), z)(Z(y, z))\right)  = z{\bf J}^{Y_+}(i_+^*(\btau(y)), z)(i_+^*Z(y, z)).\]  The conclusion then follows since $(i_+)_* \circ i_+^* (\alpha) = e(E_+) \alpha$ for $\alpha \in H^*_{CR}(X_+)$.
 \end{proof}
 
 As with the definition of $\partial_j$ in Section~\ref{s:diffeq}, for each $1 \leq j \leq n$, we can define a differential operator $\bar \partial_j$ corresponding to $E_j$.  Each $E_j$ can be written as a linear combination $E_j = \sum_{i=1}^r b^i_j p_i^+ = \sum_{i=1}^r \tilde b^i_j p_i^-$.  Define the operator
\[\bar \partial_j := \sum_{i=1}^r b^i_j y_i \frac{\partial}{\partial y_i}= \sum_{i=1}^r \tilde  b^i_j \tilde  y_i \frac{\partial}{\partial \tilde y_i}.\]  Then as in Section~\ref{s:diffeq}, 
\begin{align}
\bar \partial_j y^d &= (E_j \cdot d) y^d; \label{e:dy1} \\ \label{e:dy2}
\bar \partial_j e^{\sigma_\pm/z} &= \frac{v_j}{z }e^{\sigma_\pm/z}.
\end{align}
 We note that $E_j \in \overline C_W$ for $1 \leq j \leq n$, thus $b^r_j = \tilde b^r_j = 0$.  
 
 As one might expect, we can express the terms of the $E_\pm$-twisted $I$-function in terms of certain differential operators applied to the terms of $I^\pm$.  It is convenient to group the summands of the $I$-function in terms of powers of $y_1, \ldots, y_{r-1}$.  For $[k] \in (\LL \otimes \QQ) / \langle e \rangle$, define 
 \[ G_{[k]}^+(y, z) := ze^{\sigma_+/z}\sum_{\stackrel{l \in \KK_+}{l \equiv k \mod e}} y^l I_l^+,
 \]
 where $I_l^+$ is the $l$th coefficient, as in \eqref{e:parts}.  Then
 \[ I^+(y,z) = \sum_{[k] \in \KK_+/ \langle e \rangle} G_{[k]}^+(y, z).
 \]
 We define $G^-_{[k]}(\tilde y, z)$ similarly.  Note that $l \equiv k \mod e$ if and only if $p_i^\pm \cdot l  = p_i^\pm \cdot k$ for $1 \leq i \leq r-1$ (recall that $p_i^+ = p_i^-$ for $1 \leq i \leq r-1$).  Thus $G_{[k]}^+(y, z)$ contains exactly those summands of $I^+(y,z)$ such that the exponent of $y_i$ is $p_i^+ \cdot k$ for $1 \leq i \leq r-1$.  
 
 The linear transformation $L: H^*_{CR}(X_+)[z, z^{-1}] \to H^*_{CR}(X_-)[z, z^{-1}]$ 
 preserves the power of $y_i$ for $1 \leq i \leq r-1$.  The change of variables from $y$ to $\tilde y$ maps $\tilde y_i$ to $ y_i \cdot y_r^{c_i}$ for some $c_i$. and the asymptotic expansion treats $\tilde y_1, \ldots, \tilde y_{r-1}$ as constant.   Define $\mathbb{G}_{[k]}(y,z) := L (G^-_{[k]}(\tilde y, z))$.  By keeping track of  powers of $y_1, \ldots, y_{r-1}$, these observations allow us to refine the main result for toric varieties, Theorem~\ref{t:finalresult}, to the following:
 \begin{lemma}\label{l:mainrefined}
 For each $k \in \LL\otimes \QQ$, the linear transformation $L$ of Theorem~\ref{t:finalresult} 
 maps $G_{[k]}^+(y, z)$ to the function $\mathbb{G}_{[k]}(y,z)$, which has asymptotic expansion $G_{[k]}^-(\tilde y, z)$ as $y_r \mapsto \infty$.
 \end{lemma}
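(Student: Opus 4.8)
The plan is to deduce Lemma~\ref{l:mainrefined} from the already-established Theorem~\ref{t:finalresult} and Proposition~\ref{p:4.4} by tracking the grading of the $I$-functions by monomials in $y_1, \ldots, y_{r-1}$ (equivalently, by the quotient $(\LL\otimes\QQ)/\langle e\rangle$). First I would observe that both the linear transformation $L$ and the asymptotic-expansion/change-of-variables procedure respect this grading. For $L$: since $L$ is $\CC[z,z^{-1}]$-linear and built from the $D$-module isomorphism $L^+$ of Corollary~\ref{c:2tolast} (which is a map of $\CC[z,1/z][[y]][y^{-1},\log(y)]\otimes_{\CC[y]}D$-modules), it commutes with multiplication by $y_i$ and with $y_i\tfrac{\partial}{\partial y_i}$ for $1\le i\le r-1$; hence $L$ preserves the $y_i$-degree for $1\le i\le r-1$. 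Concretely, applying the operator $y_i\tfrac{\partial}{\partial y_i}$ ($i<r$) to $I^+(y,z)$ acts on each summand $y^l I_l^+$ by the scalar $p_i^+\cdot l$, so $G_{[k]}^+(y,z)$ is exactly the generalized-eigenspace component of $I^+$ for the tuple of eigenvalues $(p_1^+\cdot k,\ldots,p_{r-1}^+\cdot k)$; since $L$ intertwines these operators it carries this component of $I^+$ to the corresponding component of $\iI = L\cdot I^+$.

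Next I would identify that corresponding component with $\mathbb{G}_{[k]}(y,z)$. By definition $\mathbb{G}_{[k]}(y,z) := L(G^-_{[k]}(\tilde y,z))$, and one checks that $G^-_{[k]}$ is precisely the component of $I^-$ on which $\tilde y_i\tfrac{\partial}{\partial\tilde y_i}$ ($i<r$) acts by $p_i^-\cdot k = p_i^+\cdot k$. Since $\iI$ is obtained from $I^-_{reg}$ by Laplace transform and the change of variables \eqref{e:cov2}, which sends $\tilde y_i\mapsto y_i y_r^{c_i}$ for $i<r$ and only modifies the $y_r$-direction, the decomposition of $I^-$ (hence of $\iI$) into these components is preserved term by term; thus the component of $\iI$ with $y_i$-eigenvalue $p_i^+\cdot k$ is exactly $\mathbb{G}_{[k]}(y,z) = L(G^-_{[k]})$. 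Combining with the previous paragraph, $L(G_{[k]}^+(y,z)) = \mathbb{G}_{[k]}(y,z)$, which is the first assertion of the lemma.

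For the asymptotic statement, I would apply Proposition~\ref{p:4.4} component-wise. That proposition already proceeds by grouping terms of $I^-$ by powers of $x$ modulo $\sum e_j$ and invoking Watson's Lemma on each group; refining that grouping by the additional data of the powers of $\tilde y_1,\ldots,\tilde y_{r-1}$ (which Watson's Lemma treats as constants, since the asymptotics is in $u$ alone) shows that for each fixed $[k]$ the summand $\mathbb{G}_{[k]}(y,z)$ has asymptotic expansion $G_{[k]}^-(\tilde y, z)$ as $y_r\mapsto\infty$. Here one uses that $G^-_{[k]}(\tilde y,z)$ is a finite sum, over the residues $l\bmod(\sum e_j)$, of functions of the form $x^{\lambda+\text{const}}g(x)$ with $g$ analytic at $0$, multiplied by a fixed monomial in $\tilde y_1,\ldots,\tilde y_{r-1}$, so Watson's Lemma applies verbatim to each piece. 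The main obstacle — which is really only bookkeeping rather than a genuine difficulty — is verifying carefully that the several distinct operations ($L$, the Laplace transform defining $\iI$, and the change of variables \eqref{e:cov2}) all commute with projection onto a fixed $(\LL\otimes\QQ)/\langle e\rangle$-degree; once one notes that each of these operations acts only in the $y_r$ (equivalently $x$, $u$) variable and $\CC[z,z^{-1}]$-linearly in the rest, the compatibility is immediate and the lemma follows.
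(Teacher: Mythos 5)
Your proposal is correct and follows essentially the same route as the paper, which proves the lemma in a few sentences of prose preceding its statement: $L$ preserves the powers of $y_1,\ldots,y_{r-1}$ (you justify this via the $D$-module structure of Corollary~\ref{c:2tolast}, which is exactly where the paper's assertion comes from), the change of variables \eqref{e:cov2} and the Laplace transform act only in the $y_r$ direction, and the application of Watson's Lemma in Proposition~\ref{p:4.4} treats $\tilde y_1,\ldots,\tilde y_{r-1}$ as constants, so everything decomposes along the $(\LL\otimes\QQ)/\langle e\rangle$-grading. Your reading of $\mathbb{G}_{[k]}$ as the $[k]$-component of $\iI$ is the intended one (the paper's displayed definition $\mathbb{G}_{[k]}:=L(G^-_{[k]})$ appears to be a typo for $L(G^+_{[k]})$), and your handling of it introduces no gap.
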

 
Next we make the following observation, which follows immediately from \eqref{e:dy1} and \eqref{e:dy2} applied to the expression for $I_Y^\pm$.
\begin{align}\label{e:IYbetta} 
I^+_{Y}( y ,z) &:= 
\sum_{[k] \in \KK_+/ \langle e \rangle} \prod_{j=1}^n \prod_{a = 0}^{E_j \cdot k} \left( z \bar \partial_j + az \right) G_{[k]}^+(y, z)
 \\ \nonumber
I^-_{Y}( \tilde y ,z) &:= 
\sum_{[k] \in \KK_-/ \langle e \rangle} \prod_{j=1}^n \prod_{a = 0}^{E_j \cdot k} \left( z \bar \partial_j + az \right) G_{[k]}^-(\tilde y, z).
 \end{align}

Applying the operator $\prod_{j=1}^n \prod_{a = 0}^{E_j \cdot k} \left( z \bar \partial_j + az \right)$ to the relationship in Lemma~\ref{l:mainrefined}, we conclude that the same relationship between the $I$-functions for $X_+$ and $X_-$ holds for the (pushforwards of the) $I$-functions for $Y_+$ and $Y_-$.
 \begin{theorem}\label{t:ci}
 The linear transformation $L: H^*_{CR}(X_+)[z, z^{-1}] \to H^*_{CR}(X_-)[z, z^{-1}]$ of Theorem~\ref{t:finalresult} maps $I^+_{Y}( y ,z)$ to a function $\iI_Y(y, z)$ with asymptotic expansion $I^-_Y(\tilde y, z)$ as $y_r \mapsto \infty$.
 \end{theorem}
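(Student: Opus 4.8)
The plan is to deduce Theorem~\ref{t:ci} formally from Lemma~\ref{l:mainrefined} together with the presentations~\eqref{e:IYbetta} of $I^+_Y$ and $I^-_Y$ as twisting differential operators applied to the graded pieces $G^+_{[k]}$ and $G^-_{[k]}$. The one structural input to isolate first is that each $E_j$ lies in $\overline{C_W} \subset W$, while $e$ generates $W^\perp$; hence $E_j \cdot e = 0$, so $b^r_j = \tilde b^r_j = 0$ and the operator $z\bar\partial_j$ is built only out of the operators $z y_i \tfrac{\partial}{\partial y_i}$ with $1 \le i \le r-1$. In particular, by the coordinate computation preceding Proposition~\ref{p:finally}, $\bar\partial_j$ is literally the \emph{same} operator in the $y$- and $\tilde y$-coordinates, it differentiates transversally to $y_r$, and it commutes with the change of variables~\eqref{e:cov2} and with passage to the asymptotic expansion as $y_r \to \infty$.

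First I would apply $L$ to the expression for $I^+_Y$ in~\eqref{e:IYbetta}. Since $L = L^- \circ (L^+)^{-1}$ is a morphism of $D$-modules (proof of Theorem~\ref{t:finalresult}), it commutes with each $z y_i \tfrac{\partial}{\partial y_i}$, hence with every twisting factor $z\bar\partial_j + az$; and since $L$ respects the $(y_1, \dots, y_{r-1})$-multidegrees it commutes with the (locally finite) sum over $[k] \in \KK_+/\langle e\rangle$. Combining this with $L\bigl(G^+_{[k]}\bigr) = \mathbb{G}_{[k]}$ from Lemma~\ref{l:mainrefined} gives
\[
L\bigl(I^+_Y(y,z)\bigr) = \sum_{[k]\in \KK_+/\langle e\rangle} \prod_{j=1}^n \prod_{a=0}^{E_j\cdot k}\bigl(z\bar\partial_j + az\bigr)\,\mathbb{G}_{[k]}(y,z) =: \iI_Y(y,z),
\]
which is the first assertion of the theorem.

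For the asymptotic statement I would invoke Lemma~\ref{l:mainrefined} once more: $\mathbb{G}_{[k]}(y,z) \sim G^-_{[k]}(\tilde y,z)$ as $y_r \to \infty$ along a ray with $|\arg(y_r)|$ small, the expansion being an asymptotic series in $y_r$ whose coefficients are functions of $y_1, \dots, y_{r-1}$. Because the twisting operators $z\bar\partial_j + az$ differentiate only in the directions $y_1,\dots,y_{r-1}$, which are held fixed in the limit $y_r \to \infty$, they may be applied term-by-term to this asymptotic series, so $\prod_{j,a}(z\bar\partial_j+az)\mathbb{G}_{[k]} \sim \prod_{j,a}(z\bar\partial_j+az)\,G^-_{[k]}(\tilde y,z)$. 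Summing over $[k]$ and using the companion expression for $I^-_Y$ in~\eqref{e:IYbetta} then yields $\iI_Y(y,z) \sim I^-_Y(\tilde y,z)$.

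The only point that needs real justification is the term-by-term differentiation of the asymptotic expansion in the transverse variables. Here one uses that $\mathbb{G}_{[k]}$ is a piece of $\iI$, which is the Laplace transform of $I^-_{reg}$ — a function that converges on a neighborhood of $x = 0$ and, coefficient-by-coefficient in $y_1,\dots,y_{r-1}$, is bounded by an algebraic function as $x \to \infty$ (Propositions~\ref{p:Ireg} and~\ref{p:4.4}). Consequently each coefficient of the $(y_1,\dots,y_{r-1})$-expansion is genuinely analytic on the relevant domain and the Watson-type asymptotics of Proposition~\ref{p:4.4} hold locally uniformly in these variables, which legitimizes differentiating them term by term. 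No new estimates or case analysis beyond those already carried out for the toric statement are required.
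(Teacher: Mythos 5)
Your proposal is correct and follows essentially the same route as the paper: decompose $I^{\pm}_Y$ via~\eqref{e:IYbetta}, apply Lemma~\ref{l:mainrefined} to each $G^+_{[k]}$, and commute the twisting operators $z\bar\partial_j + az$ past $L$, the change of variables, and the asymptotic expansion (using $E_j \cdot e = 0$). The paper simply asserts these commutations where you spell them out; note also that since $G^{\pm}_{[k]}$ and $\mathbb{G}_{[k]}$ have fixed multidegree in $y_1,\dots,y_{r-1}$, each factor $z\bar\partial_j + az$ acts on them as multiplication by $v_j + (E_j\cdot k + a)z$, which makes the term-by-term differentiation step immediate without the uniformity discussion.
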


\begin{proof}
The operator $\prod_{a = 0}^{E_j \cdot k} \left( z \bar \partial_j + az \right)$ commutes with the linear transformation $L$, as well as with the change of variables in \eqref{e:cov2} and the asymptotic expansion $y_r \mapsto \infty$.  We conclude that 
\begin{align*}
L \left(\prod_{j=1}^n \prod_{a = 0}^{E_j \cdot k} \left( z \bar \partial_j + az \right) G_{[k]}^+(y, z)\right) &=\prod_{j=1}^n \prod_{a = 0}^{E_j \cdot k} \left( z \bar \partial_j + az \right)  \mathbb{G}_{[k]}(y,z) \\
& \sim \prod_{j=1}^n \prod_{a = 0}^{E_j \cdot k} \left( z \bar \partial_j + az \right) G_{[k]}^-(\tilde y, z).
\end{align*}
Summing over all $k \in \LL \otimes \QQ$, it follows that $L \left( I^+_{Y}( y ,z) \right) \sim I^+_{Y}( y ,z)$.  
\end{proof}

%

\bibliographystyle{plain}
\bibliography{references}

\end{document}